\numberwithin{equation}{section}
\newtheorem{theorem}{Theorem}[section]
\newtheorem{question}[theorem]{Question}
\newtheorem{fact}[theorem]{Fact}
\theoremstyle{definition}
\newtheorem{remark}[theorem]{Remark}
\newtheorem{definition}[theorem]{Definition}
\newtheorem{notation}[theorem]{Notation}
\newtheorem*{K}{Theorem \ref{t:cont}}
\newtheorem*{m1}{Theorem \ref{t:main1}}
\newtheorem*{m2}{Theorem \ref{t:main2}}
\newtheorem*{qq}{Question \ref{q:Elekes}}
\DeclareMathOperator{\diam}{diam}
\DeclareMathOperator{\dist}{dist}
\DeclareMathOperator{\Lip}{Lip}
\DeclareMathOperator{\graph}{graph}
\DeclareMathOperator{\inter}{int}
\DeclareMathOperator{\Fix}{Fix}
\title{Metric spaces admitting only trivial weak contractions}
\author{Rich\'ard Balka}
\address{Alfr\'ed R\'enyi Institute of Mathematics, Hungarian Academy of Sciences,
PO Box 127, 1364 Budapest, Hungary}
\email{balka.richard@renyi.mta.hu}
\thanks{Supported by the
Hungarian Scientific Research Fund grant no.~72655.}
\keywords{Banach, Borel class, contraction,  delta, dimension function, fixed point,
gauge, Hausdorff measure, sigma}
\subjclass[2010]{Primary: 54H25, 47H10, 28A78, 54H05; Secondary: 03E15}
\date{}
\begin{document}

\begin{abstract} If $(X,d)$ is a metric space then the map $f\colon X\to X$
is defined to be a weak contraction if $d(f(x),f(y))<d(x,y)$ for all $x,y\in X$,
$x\neq y$. We determine the simplest non-closed sets
$X\subseteq \mathbb{R}^n$ in the sense of descriptive set theoretic
complexity such that every weak contraction $f\colon X\to X$ is
constant. In order to do so, we prove that there exists a non-closed
$F_{\sigma}$ set $F\subseteq \mathbb{R}$ such that every weak contraction
$f\colon F\to F$ is constant. Similarly, there exists a non-closed
$G_{\delta}$ set $G\subseteq \mathbb{R}$ such that every weak contraction
$f\colon G\to G$ is constant. These answer questions of M. Elekes.

We use measure theoretic methods, first of all the concept of
generalized Hausdorff measure.
\end{abstract}

\maketitle

\section{Introduction}

We use the following descriptive set theoretical notation in this section.

\begin{notation} The class of open, closed, $F_{\sigma}$, and $G_{\delta}$ sets are denoted by
$\Sigma^{0}_1$, $\Pi^{0}_1$, $\Sigma^0 _2$, and $\Pi^0 _2$, respectively. The simultaneously
$F_{\sigma}$ and $G_{\delta}$ sets are denoted by $\Delta^0 _2$.
\end{notation}

M. Elekes \cite{E} introduced the next definition.
\begin{definition} We say that the metric space $X$ possesses the
\emph{Banach Fixed Point Property (BFPP)} if every contraction
$f\colon X\to X$ has a fixed point.
\end{definition}
The Banach Fixed Point Theorem implies that every complete metric
space has the BFPP. E. Behrends \cite{B} pointed out that the
converse implication does not hold. He presented the following
example, which he referred to as `folklore'.
\begin{theorem} Let $X=\graph\left(\sin(1/x)|_{(0,1]}\right)$.
Then $X\subseteq \mathbb{R}^2$ is a non-closed simultaneously $F_{\sigma}$
and $G_{\delta}$ set possessing the Banach Fixed Point Property.
\end{theorem}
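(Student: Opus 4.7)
My plan is to dispatch the descriptive-complexity assertions as a short topology exercise and then to establish the Banach Fixed Point Property by extending any contraction to the closure $\overline X$, invoking Banach's theorem there, and finally showing that the resulting fixed point must actually lie in $X$.

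For the complexity, I would write $X = \bigcup_{n \geq 1} \graph(\sin(1/x)|_{[1/n, 1]})$ as a countable union of compacta to obtain $X \in \Sigma^0_2$. Since the graph of a continuous function is closed in the product, $X$ is closed in the $G_\delta$ set $(0, \infty) \times \mathbb{R}$, so $X \in \Pi^0_2$. The closure $\overline X = X \cup (\{0\} \times [-1, 1])$ is strictly larger than $X$, so $X$ is not closed.

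For the fixed-point property, let $f \colon X \to X$ be a contraction with constant $k < 1$. Since $f$ is uniformly continuous and $X$ is dense in $\overline X$, it extends uniquely to a $k$-Lipschitz map $\bar f \colon \overline X \to \overline X$. The set $\overline X$ is compact, hence complete, so Banach's theorem yields a unique fixed point $p^* \in \overline X$; if $p^* \in X$ we are done. Suppose instead that $p^* = (0, c) \in \{0\} \times [-1, 1]$. Then I would analyze the local component structure of $\overline X$ at $p^*$: for small $r > 0$, the slice $\overline X \cap B(p^*, r)$ is the disjoint union of the vertical segment $V_r = \{0\} \times (c - r, c + r)$ together with countably many arcs $A_n^{(r)} \subset X$, one around each solution of $\sin(1/x) = c$ lying in $(0, r)$. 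Each $A_n^{(r)}$ is bounded away from the $y$-axis (so closed in the slice) and open in $X$ (so open in the slice), hence clopen; thus $V_r$ and the $A_n^{(r)}$ are precisely the connected components. Using $\bar f(p^*) = p^*$ together with $f(X) \subseteq X$, continuity forces $\bar f(V_r) \subseteq V_{kr}$ and $\bar f(A_n^{(r)}) \subseteq A_{\phi(n)}^{(kr)}$ for a well-defined index map $\phi$; in particular $\bar f$ restricts to a $k$-Lipschitz self-map of $V$ fixing $c$.

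The main obstacle is the final, quantitative step of turning this structural picture into a contradiction. Using the asymptotic spacings $x_n \sim 1/(n\pi)$ and $x_n - x_{n+1} \sim \pi x_n^2$ of the crossings $q_n = (x_n, c)$, the one-point bound $d(\bar f(q_n), p^*) \leq k\, x_n$ (which forces $x_{\phi(n)} \lesssim k\, x_n$), and the two-point bound $d(\bar f(q_n), \bar f(q_{n+1})) \leq k |x_n - x_{n+1}|$ set against the precise separation between adjacent arcs $A_j^{(kr)}$ inside the smaller ball, I would show that no assignment $\phi$ is compatible with all of these constraints for $n$ sufficiently large, contradicting the assumption $p^* \notin X$. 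This quantitative argument, where the specific geometry of $\sin(1/x)$ is used in an essential way, is the analytic heart of the proof; the rest of the argument is either topologically soft or a standard application of Banach's theorem.
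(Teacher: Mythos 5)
Note first that the paper itself offers no proof of this theorem: it is quoted from Behrends as ``folklore,'' so there is nothing internal to compare against. Your treatment of the descriptive complexity is fine, and the opening moves of your fixed-point argument (extend $f$ to a $k$-Lipschitz map $\bar f$ of the compact closure $\overline X=X\cup L$, $L=\{0\}\times[-1,1]$, obtain a fixed point $p^*$, and try to exclude $p^*\in L$) are sound. The problem is that the exclusion step --- which you yourself call ``the analytic heart of the proof'' --- is not actually carried out: you list the constraints on the index map $\phi$ and assert that ``no assignment $\phi$ is compatible,'' but you never derive the contradiction. Worse, the constraints you propose do not obviously conflict. With $x_n\sim 1/(n\pi)$ and $x_n-x_{n+1}\sim \pi x_n^2$, a map sending the $n$th crossing to (approximately) the $\lceil n/k\rceil$th crossing satisfies both the one-point bound $x_{\phi(n)}\lesssim k\,x_n$ and the two-point bound: adjacent images are then about $\tfrac1k\cdot\pi x_{\phi(n)}^2\approx k\,\pi x_n^2$ apart, which matches $k|x_n-x_{n+1}|$ to leading order rather than exceeding it. So the sketched estimates yield only a borderline equality in the limit, not a contradiction, and the gap is genuine.

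The standard argument avoids all of this local bookkeeping via one global observation: since $t\mapsto(t,\sin(1/t))$ is a homeomorphism of $(0,1]$ onto $X$, every connected subset of $X$ whose closure meets $L$ contains a full oscillation and hence has diameter at least $2$. Now take $\delta>0$ with $k\sqrt{\delta^2+4}<2$. The tail $T_\delta=\graph\bigl(\sin(1/x)|_{(0,\delta]}\bigr)$ is connected, so $f(T_\delta)$ is a connected subset of $X$ of diameter at most $k\sqrt{\delta^2+4}<2$, and therefore $\overline{f(T_\delta)}\cap L=\emptyset$, i.e.\ $f(T_\delta)$ is bounded away from the $y$-axis; the image of the compact remainder $\graph\bigl(\sin(1/x)|_{[\delta,1]}\bigr)$ is compact as well. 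Hence $f(X)$ lies in a compact subset $Y\subseteq X$, and $f|_Y\colon Y\to Y$ is a contraction of a complete space, so Banach's theorem gives a fixed point in $Y\subseteq X$. (Equivalently, in your setup: $p^*=\bar f(p^*)\in\overline{f(T_\delta)}$ would force $p^*\notin L$.) I would recommend replacing your quantitative component analysis with this diameter argument; it also handles the endpoint cases $c=\pm1$, which your local component picture treats incorrectly.
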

M. Elekes \cite{E} described the simplest non-closed sets having the
BFPP in the sense of descriptive set theoretic complexity. He proved
the following theorems.

\begin{theorem}[M. Elekes] \label{t:Elekes1}
Every open subset of $\mathbb{R}^n$ with the Banach Fixed Point Property is
closed. Every simultaneously $F_{\sigma}$ and $G_{\delta}$ subset of
$\mathbb{R}$ with the Banach Fixed Point Property is closed.
\end{theorem}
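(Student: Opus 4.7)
The plan is to show that any non-closed set in either of the stated classes admits a fixed-point-free contraction, which refutes BFPP. In both parts I would pick a point $x_0$ in the topological closure but missing from the set, and construct a contraction whose (unique, by Banach) fixed point after extension to the completion lies at $x_0 \notin U$ (respectively $x_0 \notin A$).

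For the open case, let $U \subseteq \mathbb{R}^n$ be open and not closed, and choose $x_0 \in \partial U \setminus U$. The natural candidate is the affine map $f_0(x) = x_0 + \lambda (x - x_0)$ with $\lambda \in (0,1)$: it is $\lambda$-Lipschitz with unique fixed point $x_0 \notin U$. The issue is that $f_0(U) \subseteq U$ need not hold in general (e.g.\ for $U = \mathbb{R}^n \setminus \overline{B(0,1)}$, which is not star-shaped about any boundary point). My plan is to pick a geometric sequence $(y_k) \subseteq U$ with $y_k \to x_0$ and $|y_k - x_0| = 2^{-k}$, define $f$ on the sequence by $f(y_k) = y_{k+1}$ (which is $\tfrac{1}{2}$-Lipschitz on the sequence), and extend to $U$ via a Kirszbraun-type extension or by interpolating over small open balls $B(y_k, r_k) \subseteq U$ afforded by the openness of $U$. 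The key step will be to ensure the extension lands in $U$.

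For the second part, let $A \subseteq \mathbb{R}$ be $F_\sigma \cap G_\delta$ and not closed, and pick $x_0 \in \overline{A} \setminus A$. Writing $A = \bigcup_n F_n$ with $F_n$ closed and increasing, any sequence $(y_k) \subseteq A$ tending to $x_0$ must have $y_k \in F_{n_k}$ with $n_k \to \infty$; otherwise infinitely many $y_k$ lie in a single $F_N$, forcing $x_0 \in \overline{F_N} = F_N \subseteq A$, contradicting $x_0 \notin A$. Passing to a subsequence I can make $(y_k)$ strictly monotone with $|y_k - x_0| = 2^{-k}$ and define $f(y_k) = y_{k+1}$. The $G_\delta$ representation $A = \bigcap_m V_m$ is then used to extend $f$ to all of $A$ while keeping the image in $A$: near each $y_k$, the $V_m$ contain open intervals around $y_k$ up to any prescribed depth, providing enough room for the interpolated image to remain in $A$.

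The main obstacle in both parts will be the same: ensuring $f(U) \subseteq U$ (respectively $f(A) \subseteq A$) while keeping the Lipschitz constant strictly less than $1$. In the $\mathbb{R}^n$ open case, openness gives local flexibility but no global structure, so the construction must be tailored when $U$ wraps around a hole in a complicated way; in the $\Delta^0_2$ case in $\mathbb{R}$, the interplay of the $F_\sigma$ and $G_\delta$ descriptions is essential, because $A$ may be totally disconnected (e.g.\ a convergent sequence) and so carries no intrinsic ``room'' between its points.
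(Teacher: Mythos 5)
The paper itself gives no proof of this theorem --- it is quoted from Elekes's article \cite{E} --- so I can only assess your sketch on its own terms. Your overall strategy (exhibit a fixed-point-free contraction on any non-closed set of the given class by pushing towards a point $x_0\in\overline{U}\setminus U$, so that the fixed point of the extension to the closure is the missing point) is the right and essentially forced one, but the sketch stops exactly where the content of the theorem begins. In the open case: (i) with $\lvert y_k-x_0\rvert=2^{-k}$ the map $y_k\mapsto y_{k+1}$ need not even be $1$-Lipschitz on the sequence in $\mathbb{R}^n$ --- if $y_k,y_{k+1}$ lie on one ray from $x_0$ and $y_{k+2}$ on the opposite ray, then $\lvert y_k-y_{k+1}\rvert=2^{-k-1}$ while $\lvert y_{k+1}-y_{k+2}\rvert=3\cdot 2^{-k-2}$, a ratio of $3/2$; this is repairable by making $\lvert y_k-x_0\rvert$ decay much faster. (ii) More seriously, a Kirszbraun extension preserves the Lipschitz constant but gives no control on where the image lands, and ``interpolating over small balls'' is not yet a construction; arranging a contraction of all of $U$ whose range stays inside $U$ (say, inside $\bigcup_k B(y_k,r_k)$ with $B(y_k,r_k)\subseteq U$) is the entire first half of the proof, and you have only named the difficulty, not resolved it.

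The second half has a more fundamental defect. By Theorem \ref{t:Elekes2} there exist non-closed $G_{\delta}$ (and non-closed $F_{\sigma}$) subsets of $\mathbb{R}$ with the BFPP, so any correct argument must use the conjunction of the two representations essentially. Yours does not: the $F_{\sigma}$ side is used only to note $n_k\to\infty$, which is then never exploited, and the $G_{\delta}$ side is used via ``the $V_m$ contain open intervals around $y_k$'' --- but $V_m\supseteq A$ being open yields intervals in $V_m$, not in $A$, so it provides no room inside $A$ whatsoever. If that mechanism worked it would apply verbatim to every non-closed $G_{\delta}$ set and contradict Theorem \ref{t:Elekes2}. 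The missing ingredient is a genuine structural consequence of $A\in\Delta^0_2$, for instance the Baire-category fact that if $A=\bigcup_n F_n$ and $\mathbb{R}\setminus A=\bigcup_n H_n$ with all $F_n,H_n$ closed, then every interval contains a subinterval lying entirely in $A$ or entirely in $\mathbb{R}\setminus A$. Applied near $x_0$, this dichotomy is what produces either actual gaps of $A$ separating the blocks on which one can define the map piecewise (so that a piecewise-constant map can be a contraction) or whole intervals of $A$ to receive interpolated values. Without a step of this kind the construction cannot be completed.
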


\begin{theorem}[M. Elekes]  \label{t:Elekes2} There exist non-closed $F_{\sigma}$
and non-closed $G_{\delta}$ subsets of $\mathbb{R}$ with the Banach Fixed
Point Property.
\end{theorem}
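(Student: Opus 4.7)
The plan is to construct two separate examples in $\mathbb{R}$---one of class $F_{\sigma}$, the other of class $G_{\delta}$---that are non-closed but possess the Banach Fixed Point Property. The common starting point is the observation that any contraction $f\colon X\to X$ is uniformly continuous and therefore extends uniquely to a contraction $\bar f\colon\bar X\to\bar X$ on the completion, with the same Lipschitz constant. By Banach's fixed point theorem $\bar f$ has a unique fixed point $p^{*}\in\bar X$, and the orbits $f^{n}(x)$ all converge to $p^{*}$. Consequently, $X$ possesses the BFPP if and only if for every contraction $f\colon X\to X$ one has $p^{*}\in X$. Both constructions must therefore be designed so that no contraction on $X$ can drive its iterates to a point of $\bar X\setminus X$.

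For the $F_{\sigma}$ example, I would look at a countable disjoint union $F=\bigsqcup_{n\ge 1} I_{n}$ of closed intervals whose union has at least one limit point lying outside $F$. Since each $I_{n}$ is connected and any contraction is continuous, $f(I_{n})$ lies in a single component $I_{\sigma(n)}$, inducing a combinatorial map $\sigma\colon\mathbb{N}\to\mathbb{N}$. The Lipschitz inequality $|f(I_{n})|\le c|I_{n}|$ imposes strong size constraints on $\sigma$. The goal is to arrange the lengths and placements of the $I_{n}$ so delicately that, for every contraction ratio $c<1$, no orbit $\sigma^{k}(n)$ can leave every finite initial segment of $\mathbb{N}$; this will block $p^{*}$ from coinciding with an accumulation point lying outside $F$ and force it into some invariant $I_{m}\subseteq F$.

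For the $G_{\delta}$ example, I would start from a compact set $K\subseteq\mathbb{R}$ already known to have the BFPP (for instance, a suitable self-similar Cantor-like set) and delete a countable set $Q\subseteq K$, obtaining $G=K\setminus Q$. This set is $G_{\delta}$ because $G=K\cap\bigcap_{q\in Q}(\mathbb{R}\setminus\{q\})$, and it is non-closed provided the points of $Q$ are limit points of $K$. Every contraction $f\colon G\to G$ extends to a contraction $\bar f\colon K\to K$ with a unique fixed point $p^{*}\in K$, so it would suffice to arrange that $p^{*}\notin Q$ for every such $f$. The strategy is to enumerate sufficiently many potential fixed points---for instance, by approximating arbitrary contractions of $G$ by a countable dense family of ``rational'' contractions---and then to choose $Q$ inductively, at each stage picking a point of $K$ distinct from all the relevant fixed points produced so far, while keeping the complement $K\setminus Q$ dense.

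The main difficulty in both cases is of the same flavour: one must handle \emph{all} contractions, of \emph{arbitrary} Lipschitz constant $c<1$, simultaneously. In the $F_{\sigma}$ case this amounts to tuning a countable combinatorial datum so that the induced shift $\sigma$ behaves well uniformly in $c$; in the $G_{\delta}$ case it requires describing which points of $K$ can arise as fixed points of contractions of $K\setminus Q$ and then making a diagonal choice of $Q$ avoiding all of them. I expect the diagonal argument in the $G_{\delta}$ case to be the technically harder step, since $K\setminus Q$ offers weaker combinatorial rigidity than a disjoint union of intervals, and the extended contraction $\bar f$ could a priori place its fixed point at any element of $Q$.
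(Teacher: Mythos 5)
First, a point of reference: the paper does not prove Theorem \ref{t:Elekes2} at all --- it is quoted from Elekes's paper \cite{E} --- so there is no in-paper proof to compare against, and your proposal must stand on its own. Your general framework (a contraction extends to the closure, where Banach gives a unique fixed point that must be shown to lie in $X$) is correct and standard. However, your $F_{\sigma}$ construction is blocked by Theorem \ref{t:Elekes1}, which is stated in this very paper: if $F=\bigcup_n I_n$ is a union of pairwise disjoint closed intervals and the set $\overline{F}\setminus F$ of missing accumulation points is countable --- in particular in the picture you describe, with ``at least one limit point lying outside $F$'' and orbits of $\sigma$ escaping ``every finite initial segment of $\mathbb{N}$'' --- then $\overline{F}\setminus F$ is $F_{\sigma}$, hence $F=\overline{F}\cap\bigl(\mathbb{R}\setminus(\overline{F}\setminus F)\bigr)$ is simultaneously $F_{\sigma}$ and $G_{\delta}$. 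Being non-closed, it cannot have the BFPP, no matter how delicately you tune lengths and gaps. To escape this obstruction you would need $\overline{F}\setminus F$ to be non-$F_{\sigma}$, hence uncountable and topologically complicated (e.g.\ a co-countable subset of a Cantor set), at which point the combinatorics of a single map $\sigma\colon\mathbb{N}\to\mathbb{N}$ tending to ``infinity'' no longer describes the problem.

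The $G_{\delta}$ plan has a different gap: the diagonalization is not available as described. Every point of $K$ is the fixed point of a constant contraction, and the fixed point of a contraction of ratio at most $c$ depends continuously on the contraction (with modulus $\|f-g\|_{\infty}/(1-c)$), so the set of fixed points realizable by contractions of closed subsets of $K$ is in general uncountable; avoiding the countably many fixed points of a dense family of ``rational'' contractions does not avoid the fixed points of their limits. There is also a circularity (which contractions are relevant depends on the $Q$ being built) and a complexity requirement you do not address: $Q$ must be dense in a perfect part of $K$, for otherwise $Q$ is $G_{\delta}$ in $K$, so $K\setminus Q$ is again $\Delta^0_2$ and Theorem \ref{t:Elekes1} applies; merely keeping $K\setminus Q$ dense is not enough. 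Elekes's actual constructions --- and their refinements in Section \ref{s:main} here --- proceed quite differently: one takes the complement in $\mathbb{R}$ of a small $F_{\sigma}$ or $G_{\delta}$ set assembled from translates of pieces of a ``rigid'' compact set $K$, and uses a rigidity theorem for $K$ (Theorem \ref{t:Etyp}, or its Hausdorff-measure analogue Theorem \ref{t:cont}) to show that the extension to $\mathbb{R}$ of a putative fixed-point-free (or non-constant) contraction leads to a contradiction.
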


The above three theorems answer the question about the lowest
possible Borel classes of $\mathbb{R}^n$ having a non-closed element with
the BFPP. In the language of descriptive set theory, if $n\geq 2$
then $\Delta^0_2$ is the best possible class, since there are no
$\Sigma^0_1$ and $\Pi^0_1$ examples. If $n=1$ then $\Sigma^0_2$ and
$\Pi^0_2$ are possible, but $\Delta_2^0$ is not.

\bigskip

Note that if every weak contraction $f\colon X\to X$ is constant
then $X$ has the BFPP. There are infinite complete metric spaces
that admit only trivial weak contractions, for example the metric
spaces $X=\mathbb{Z}\times \{0\}^{n-1}\subseteq \mathbb{R}^n$ clearly have this
property (there is a non-degenerate connected compact example in
$\mathbb{R}^n$ for every $n\geq 2$, see later). Therefore it is natural
to ask the following question.

\begin{question}[M. Elekes] \label{q:Elekes} What are the lowest possible Borel classes of $\mathbb{R}^n$ having
a non-closed element $X$ such that every weak contraction
$f\colon X\to X$ is constant?
\end{question}

The main goal of our paper is to answer Question \ref{q:Elekes}.

\bigskip

On the one hand, Theorem \ref{t:Elekes1} yields that there are no
$\Sigma^0_1$ and $\Pi^0_1$ examples in the cases $n\geq 2$.

On the other hand, T.~Dobrowolski \cite{D} pointed out the connection between our question
and the so called \emph{Cook continuum}; that is, a non-degenerate connected compact topological space $C$
such that every continuous map $f\colon C\to C$ is either constant or the identity. It was named after H.~Cook \cite{C},
who first constructed such an object. Cook's example cannot be embedded in $\mathbb{R}^2$, only in $\mathbb{R}^3$.
Later T.~Ma\'ckowiak \cite[Cor.~32.]{Ma1} has shown that there exists an arc-like (snake-like) Cook continuum, and arc-like continua are
embeddable in the plane by \cite[Thm.~4.]{Bi}. The next theorem is straightforward,
it follows that the answer is $\Delta^0_2$ if $n\geq 2$.

\begin{theorem}[Ma\'ckowiak, Dobrowolski] Let $X=C\setminus\{c_0\}$, where $C\subseteq \mathbb{R}^2$ is a Cook continuum and
$c_0\in C$ is arbitrary. Then $X\subseteq \mathbb{R}^2$ is
non-closed, simultaneously $F_{\sigma}$ and $G_{\delta}$, and every
weak contraction $f\colon X\to X$ is constant.
\end{theorem}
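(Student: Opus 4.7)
My plan is to reduce the problem to the defining property of the Cook continuum by extending any weak contraction on $X$ to a continuous self-map of the whole continuum $C$, and then invoking the dichotomy (constant or identity).

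The descriptive set theoretic assertions should be immediate. Since $C$ is a non-degenerate continuum, it has no isolated points, so $c_0\in \overline{X}\setminus X$ and in particular $X$ is not closed. Writing $X=C\cap(\mathbb{R}^2\setminus\{c_0\})$ realizes $X$ as the intersection of a closed set and an open set. Intersecting the two $G_\delta$ sets $C$ and $\mathbb{R}^2\setminus\{c_0\}$ gives $X\in\Pi^0_2$, and intersecting the closed (hence $F_\sigma$) set $C$ with the open (hence $F_\sigma$) set $\mathbb{R}^2\setminus\{c_0\}$ gives $X\in\Sigma^0_2$.

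For the main claim, let $f\colon X\to X$ be a weak contraction. My first step is to observe that the strict inequality implies $d(f(x),f(y))\le d(x,y)$ for all $x,y$, so $f$ is $1$-Lipschitz and, in particular, uniformly continuous on $X$. Since $X$ is dense in the compact (hence complete) space $C$, uniform continuity yields a unique continuous extension $\bar f\colon C\to C$; the codomain is legitimately $C$ because $f(X)\subseteq X\subseteq C$ and $C$ is closed in $\mathbb{R}^2$. Applying the Cook property to $\bar f$ forces $\bar f$ to be either $\id_C$ or a constant map.

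The last step is to rule out the identity case, which is immediate: $\bar f=\id_C$ would restrict to $f=\id_X$, contradicting the strict inequality $d(f(x),f(y))<d(x,y)$ at any pair of distinct points (and $X$ has at least two points). Hence $\bar f$ is constant and so is $f=\bar f|_X$. I do not foresee any genuine obstacle; the argument simply combines the Cook dichotomy with standard uniform-continuity/extension facts, so the work is essentially bookkeeping.
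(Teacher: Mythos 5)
Your proof is correct, and since the paper declares this theorem ``straightforward'' and supplies no proof, your argument (extend the $1$-Lipschitz map by uniform continuity from the dense set $X$ to $C$, apply the Cook dichotomy, and rule out the identity via the strict inequality) is precisely the intended one. No gaps.
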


If $n=1$ then Theorem \ref{t:Elekes1} implies that there is no
$\Delta_2^0$ example for Question \ref{q:Elekes}. In the positive
direction M. Elekes obtained the following partial result.

\begin{theorem}[M. Elekes] \label{t:Elekes3} There exists a non-closed $G_{\delta}$ set
$G\subseteq \mathbb{R}$ such that every contraction $f\colon G\to G$ is
constant.
\end{theorem}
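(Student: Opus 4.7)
The plan is to build a countable $G_{\delta}$ set $G\subseteq \mathbb{R}$ whose metric is so rigid that no non-constant map $f\colon G\to G$ can be a contraction. The argument falls into three parts.

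First, I would set up the standard extension-and-fixed-point machinery. Any contraction $f\colon G\to G$ of Lipschitz constant $q<1$ is uniformly continuous and thus extends to a contraction $\bar f\colon \overline G\to\overline G$ of the same constant. Since $\overline G\subseteq \mathbb{R}$ is closed and hence complete, Banach's theorem provides a unique fixed point $p\in \overline G$ with $\bar f^n(x)\to p$ for every $x\in \overline G$. The key feature is that, $G$ being non-closed, $p$ may lie in $\overline G\setminus G$, and the construction will be designed to exploit this.

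Second, I would construct $G=\bigcup_n B_n$ as a countable union of finite ``clusters,'' each $B_n$ sitting in a small interval $I_n$, with the $I_n$ converging to a single accumulation point $p_0\notin G$, so that $\overline G\setminus G=\{p_0\}$. To rule out the obvious self-similar contractions of naive sequences or Cantor endpoint sets, the clusters must be placed with care: the internal distances within each $B_n$ and the scales between consecutive clusters should be chosen so that for every rational $q\in (0,1)$ and every candidate finite combinatorial ``pattern'' attempting to realize a contraction of ratio $q$, some scale of the construction blocks it. A diagonal enumeration over pairs (rational $q$, finite pattern), removing at each stage a suitable open neighborhood to kill the corresponding pattern, yields $G$ as the intersection of countably many open sets; arranging that $p_0$ is never reinserted keeps $G$ non-closed.

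Third, I would verify triviality of contractions. Assume $f\colon G\to G$ is a non-constant contraction of ratio $q$ and fix a rational $q'\in (q,1)$. The case $p\in G$ is handled directly: since $p$ is isolated in $G$, combining the inequality $|f(x)-p|\le q|x-p|$ with the sparse geometry of $G$ near $p$ forces $f$ to collapse every orbit to $p$, hence $f$ is identically $p$. Thus $p=p_0$, and the iterates $\bar f^N$ uniformly compress $\overline G$ into arbitrarily small neighborhoods of $p_0$, so that $f^N(G)$ eventually lies in finitely many clusters $B_k$. The non-constant behavior of $f$ on some finite subset then produces a finite pattern with rational bound $q'$ that was explicitly blocked at some stage of the diagonal construction, the desired contradiction.

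The main obstacle is the second step: blocking every non-trivial pattern-and-ratio simultaneously, while keeping $G$ a $G_{\delta}$ set with exactly one missing accumulation point, requires delicate control of the cluster sizes $|B_n|$, their positions, and their internal distances. One must arrange that the open removals performed to block a new pattern do not destroy configurations left intentionally at earlier stages, and in particular do not force $p_0$ into $G$. This combinatorial-geometric bookkeeping is the technical heart of the argument.
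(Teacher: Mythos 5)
Your construction cannot work, for a structural reason already contained in the paper. You arrange $\overline{G}\setminus G=\{p_0\}$, so $G=\overline{G}\cap(\mathbb{R}\setminus\{p_0\})$ is the intersection of a closed set with an open set, hence simultaneously $F_{\sigma}$ and $G_{\delta}$. Since ``every contraction $f\colon G\to G$ is constant'' implies the Banach Fixed Point Property, Theorem \ref{t:Elekes1} would force such a $G$ to be closed; a non-closed example of your shape therefore does not exist, no matter how delicately the clusters are placed. Any correct example must be properly $\Pi^0_2$ and not $\Delta^0_2$; in particular $\overline{G}\setminus G$ cannot be a single point, and $G$ cannot be a countable set whose closure adds only finitely many points.

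One can also see the failure concretely, which shows why the diagonalization over ``patterns'' cannot rescue the plan. Say $p_0=0$ and the clusters $B_n\subseteq I_n$ decrease to $0$ with gaps $g_n=\dist\left(I_n,\bigcup_{k>n}I_k\right)>0$. After the entire geometry is fixed, choose $m(1)<m(2)<\cdots$ with $\sup I_{m(n)}\leq g_n/2$, pick $y_n\in B_{m(n)}$, and set $f\equiv y_n$ on $B_n$. For $x\in B_n$ and $x'\in B_{n'}$ with $n<n'$ both $y_n$ and $y_{n'}$ lie in $(0,\sup I_{m(n)}]$, so $|f(x)-f(x')|\leq \sup I_{m(n)}\leq g_n/2\leq |x-x'|/2$, while $f$ is constant on each cluster; hence $f$ is a $\tfrac12$-contraction of $G$ into $G$, and it is non-constant because $y_1\neq y_2$. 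This map is built \emph{after} the construction is finished and exploits exactly the accumulation at the missing point, so no countable list of forbidden finite patterns chosen during the construction can exclude it. The actual proof (due to Elekes \cite{E}, and strengthened in Theorem \ref{t:main2} of this paper) proceeds quite differently: it starts from an uncountable, metrically rigid compact set $K$ (a Baire-generic one in \cite{E}, a balanced one here) for which contractive images of $K$ are small inside $K$, and takes $G$ to be the complement of a Lebesgue-null $F_{\sigma}$ set assembled from translates of elementary pieces of $K$, so that $G$ is dense, co-null, and genuinely $G_{\delta}$ but not $F_{\sigma}$.
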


The proof of Theorem \ref{t:Elekes3} is based on the following
theorem, that is interesting in its own right.

\begin{theorem}[M. Elekes] \label{t:Etyp} For the generic compact set $K\subseteq \mathbb{R}$ (in the sense of Baire category)
for any contraction $f\colon K\to \mathbb{R}$ the set $f(K)$ does not contain a non-empty relatively open subset of $K$.
\end{theorem}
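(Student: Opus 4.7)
Working in the Polish space $\mathcal{K}=\mathcal{K}([0,1])$ of nonempty compact subsets of $[0,1]$ under the Hausdorff metric, the plan is to show that the set of $K$ for which some contraction $f\colon K\to\mathbb{R}$ satisfies $f(K)\supseteq K\cap I$ for some $K$-meeting open interval $I$ is meager. First I reduce to countable data: any contraction has Lipschitz constant $q<1$, so fixing a rational $r\in(q,1)$ and applying the McShane extension, $f$ may be assumed to be an $r$-Lipschitz map on $[0,1]$; since $f(K)$ must meet $[0,1]$, the range can be restricted to a fixed bounded interval. Picking a countable base $\{I_n\}$ of open subintervals of $[0,1]$ and rationals $r_k\in(0,1)$, the bad set is contained in
$$\bigcup_{k,n}B(r_k,I_n),\qquad B(r,I)=\{K\in\mathcal{K}:\exists\ r\text{-Lipschitz }f\colon[0,1]\to\mathbb{R}\text{ with }\emptyset\ne K\cap I\subseteq f(K)\},$$
so it suffices to show each $B(r,I)$ is meager. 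By Arzel\`a--Ascoli the restricted $r$-Lipschitz family is compact in $C([0,1])$, so $B(r,I)$ is analytic (a projection of a closed set from a compact product).

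The obstruction will be supplied by a generalized Hausdorff measure $\mathcal{H}^{\varphi}$ with a carefully chosen gauge $\varphi$. For each pair $(r,I)$ I aim to produce a gauge $\varphi=\varphi_{r,I}$ and a comeager set $\mathcal{G}_{r,I}\subseteq\mathcal{K}$ such that (i) $\mathcal{H}^{\varphi}(K\cap I)>0$ whenever $K\in\mathcal{G}_{r,I}$ and $K\cap I\neq\emptyset$, and (ii) $\mathcal{H}^{\varphi}(f(K))<\mathcal{H}^{\varphi}(K\cap I)$ for every $r$-Lipschitz $f\colon[0,1]\to\mathbb{R}$. Once this is done, $\mathcal{G}_{r,I}\cap B(r,I)=\emptyset$, making $B(r,I)$ meager. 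Property (ii) comes from the standard Lipschitz scaling $\mathcal{H}^{\varphi}(f(A))\le c(r,\varphi)\,\mathcal{H}^{\varphi}(A)$, with $c(r,\varphi)<1$ when $\varphi$ is a power-type gauge $t^{s}$ ($s>0$) and $r<1$. Property (i) will be enforced by a Baire-category perturbation: given any basic open $U\subseteq\mathcal{K}$ and any basic interval $I_n$, one slightly perturbs a candidate $K$ in $U$ by implanting inside $K\cap I_n$ a small Cantor-like piece of prescribed fractal type whose $\varphi_{r,I_n}$-measure is bounded below; the set of $K$ containing such a thick piece in $I_n$ is open and dense in $U$, giving a dense open $\mathcal{G}_{r,I_n}^{k}$ at each stage. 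The countable intersection $\bigcap_{k,n}\mathcal{G}_{r_k,I_n}$ is then a dense $G_{\delta}$ in $\mathcal{K}$ whose members satisfy the conclusion of the theorem.

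The main obstacle is the compatibility of (i) and (ii) in view of the fact that the generic $K\in\mathcal{K}([0,1])$ has classical Hausdorff dimension $0$: no single power gauge $t^{s}$ with $s>0$ can give (i) for the generic $K$, while very slowly decaying gauges fail (ii) because the contraction factor $c(r,\varphi)\to 1$ as $\varphi$ flattens. The resolution is to let $\varphi_{r,I}$ depend on both parameters — its "dimension" $s=s(r,I_n)$ tuned to the scale of the implanted Cantor piece at the corresponding stage — and to run the perturbation argument along a diagonal scheme indexed by $(r_k,I_n)$. Executing this dimension-matching in tandem with the Baire-category perturbation is the technical heart of the proof; the remaining ingredients (Arzel\`a--Ascoli, McShane extension, countable bases, and the Lipschitz scaling of generalized Hausdorff measure) are routine.
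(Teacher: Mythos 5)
This theorem is not proved in the paper at all --- it is quoted from Elekes's article \cite{E} --- so there is no internal proof to compare against; I can only assess your argument on its own terms, and it has a fatal gap at its core. Property (i) is unachievable: for \emph{any} fixed gauge function $\varphi$, the set $\{K\in\mathcal{K}([0,1]):\mathcal{H}^{\varphi}(K)=0\}$ is comeager. Indeed, for fixed $\delta,\varepsilon>0$ the condition $\mathcal{H}^{\varphi}_{\delta}(K)<\varepsilon$ is open in the Hausdorff metric (a finite open cover of $K$ with small $\varphi$-weight covers a whole neighbourhood of $K$) and dense (finite sets are dense and can be covered by finitely many intervals of arbitrarily small diameter, with total weight tending to $0$ since $\varphi(0+)=0$); intersecting over rational $\delta,\varepsilon$ gives the claim. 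Hence within the open set $\{K:K\cap I\neq\emptyset\}$ the sets $K$ with $\mathcal{H}^{\varphi_{r,I}}(K\cap I)>0$ form a \emph{meager} family, so your $\mathcal{G}_{r,I}$ cannot be comeager. Letting the gauge depend on the pair $(r,I)$ does not help, because the obstruction applies separately to each fixed gauge; the only escape is to let the gauge depend on $K$ itself, which is exactly what Theorem \ref{t:cont} and \cite{BM} do, but that destroys the uniform-comeager structure your scheme requires. Relatedly, your claim that ``the set of $K$ containing such a thick piece in $I_n$ is open and dense'' is false: it is dense but not open, since a finite $\varepsilon$-net of such a $K$ is arbitrarily close in the Hausdorff metric and carries zero $\mathcal{H}^{\varphi}$-measure for every gauge. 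A secondary problem is (ii): the Lipschitz scaling inequality bounds $\mathcal{H}^{\varphi}(f(K))$ by $c\,\mathcal{H}^{\varphi}(K)$, not by $c\,\mathcal{H}^{\varphi}(K\cap I)$, so even granting (i) you would additionally need global control of $\mathcal{H}^{\varphi}(K)$ relative to $\mathcal{H}^{\varphi}(K\cap I)$, which the local perturbation does not provide.

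The statement can in fact be proved without any measure theory, by a counting argument that produces genuinely \emph{open} dense conditions. Fix a rational $r\in(0,1)$ and a basic open interval $I$; it suffices to show that the set of $K$ admitting an $r$-Lipschitz $f\colon K\to\mathbb{R}$ with $\emptyset\neq K\cap I\subseteq f(K)$ is nowhere dense in $\{K:K\cap I\neq\emptyset\}$. Given $K_0$ meeting $I$ and $\varepsilon>0$, replace $K_0$ by a finite $\varepsilon$-net $F$ of cardinality $M$ containing a point of $I$, and implant near that point $N>2M$ points of $I$ arranged in $N/2$ pairs, each pair at mutual distance strictly between $r\eta$ and $\eta$ for a tiny $\eta$. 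The resulting $K$ is covered by $M+N/2<N$ open intervals of diameter less than $\eta$, so $f(K)$ is a union of $M+N/2$ sets of diameter less than $r\eta$, each containing at most one of the $N$ implanted points; hence $f(K)$ omits one of them and cannot contain $K\cap I$. Both conditions (``$K$ is covered by $M+N/2$ intervals of diameter $<\eta$'' and ``$K\cap I$ contains $N$ points pairwise more than $r\eta$ apart'') persist on a Hausdorff-metric neighbourhood of $K$, which is what your thick-Cantor-piece condition fails to do. I recommend rebuilding the proof along these lines, or else consulting \cite{E} directly.
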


In order to answer Question \ref{q:Elekes} it is enough to show that
there are non-closed $\Sigma^0_2$ and $\Pi^0_2$ subsets of
$\mathbb{R}$ that admit only trivial weak contractions. Therefore we prove
the following theorems.

\begin{m1}[Main Theorem, $F_{\sigma}$ case] There exists a non-closed $F_{\sigma}$
set $F\subseteq \mathbb{R}$ such that every weak contraction $f\colon F\to
F$ is constant.
\end{m1}

\begin{m2}[Main Theorem, $G_{\delta}$ case] There exists a non-closed $G_{\delta}$
set $G\subseteq \mathbb{R}$ such that every weak contraction $f\colon G\to
G$ is constant.
\end{m2}

The heart of the proof is the following theorem, that is a partial,
measure theoretic analogue of Theorem \ref{t:Etyp}. For a gauge function $h$ let us denote by $\mathcal{H}^{h}$ the $h$-Hausdorff measure.
\begin{K}[simplified version] There exists a compact set $K\subseteq \mathbb{R}$ and a continuous gauge function $h$ such
that $0<\mathcal{H}^{h}(K)<\infty$, and for every weak contraction $f\colon
K\to \mathbb{R}$ we have $\mathcal{H}^{h} \left(K\cap f(K)\right)=0$.
\end{K}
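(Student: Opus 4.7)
The plan is to construct $K$ as a Cantor-type subset of $[0,1]$ by a recursive procedure that simultaneously fixes a decreasing sequence of common child-interval lengths $\ell_1>\ell_2>\cdots$ and a strictly increasing sequence $N_1<N_2<\cdots$ of children-per-level counts; at stage $n$, $K_n$ is the disjoint union of $N_n$ closed intervals of length $\ell_n$, each sitting inside a stage-$(n-1)$ interval, and $K=\bigcap_n K_n$. I would define the continuous gauge $h$ by interpolation between the prescribed values $h(\ell_n)=1/N_n$, so the uniform mass distribution on $K$ yields $0<\mathcal{H}^h(K)<\infty$ via the standard mass-distribution principle. The remaining freedom in the construction is the \emph{placement} of the children inside each parent interval, and the whole technical burden consists in choosing these placements so that $K$ is rigid enough to resist every weak contraction at once.

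For a weak contraction $f\colon K\to\mathbb{R}$ I would use two elementary facts. First, $f$ is $1$-Lipschitz on $K$. Second, for every $\delta>0$ the quantity
\[
\alpha(\delta):=\sup\bigl\{|f(x)-f(y)|/|x-y|: x,y\in K,\ |x-y|\geq\delta\bigr\}
\]
is strictly less than $1$, by compactness of the set of admissible pairs and continuity of the ratio together with the strict inequality defining a weak contraction. Hence, on scales $\geq\delta$, $f$ is a genuine $\alpha(\delta)$-contraction. Fixing $n$ and setting $\delta=\ell_{n-1}$, the map $f$ sends each parent interval of $K_{n-1}$ into an interval of length at most $\alpha_n\ell_{n-1}$, where $\alpha_n:=\alpha(\ell_{n-1})<1$.

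I would then estimate $\mathcal{H}^h(f(K)\cap K)$ as follows. On each parent interval $I$ of level $n-1$, $f(K\cap I)$ is contained in an interval of length $\alpha_n\ell_{n-1}$ placed in $\mathbb{R}$ arbitrarily. The intersection of this image with the stage-$n$ skeleton $K_n$ is controlled by a \emph{rigidity lemma} built into the construction: for every affine map $x\mapsto cx+t$ with $|c|\leq\alpha_n$, the set $(cK+t)\cap K$ meets only few of the $N_n/N_{n-1}$ level-$n$ intervals inside each parent, so it has $h$-measure at most $\varepsilon_n/N_{n-1}$ with $\varepsilon_n\to 0$. Summing over parents and letting $n\to\infty$ gives $\mathcal{H}^h(f(K)\cap K)=0$. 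To pass from affine maps to a general weak contraction one approximates $f$ on each parent interval by its secant: the deviation is bounded by the $1$-Lipschitz oscillation, which can be absorbed by inflating $\alpha_n$ slightly while keeping it strictly below $1$.

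The main obstacle is the rigidity lemma: the placements of children at level $n$ must simultaneously defeat an entire continuum of affine maps with slopes up to $\alpha_n$, for every $n$, while still obeying the gauge identity $N_n h(\ell_n)=1$ that produces a non-trivial $\mathcal{H}^h$. I would attack this by a Baire category argument in the parameter space of placement sequences: for each $n$, each $c$ in a sufficiently dense finite net of slopes and each $t$ in a sufficiently dense finite net of translations, the set of placements for which the overlap $(cK+t)\cap K$ violates the desired bound is nowhere dense. Taking any placement outside the countable union of these meagre sets produces a $K$ with the required rigidity, and hence the desired gauge $h$ and weak-contraction property.
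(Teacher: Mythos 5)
Your overall architecture (a Cantor-type set $K=\bigcap_n K_n$ with $N_n$ intervals of length $\ell_n$ at level $n$, and a gauge $h$ interpolating $h(\ell_n)=1/N_n$ so that $0<\mathcal{H}^h(K)<\infty$) matches the paper's. The gap is in the second half: the entire burden is delegated to an unproven ``rigidity lemma'', and that lemma, in the form you state it, cannot hold. At a fixed level $n$ you require that for every affine map $x\mapsto cx+t$ with $|c|\le\alpha_n$ the copy $cK+t$ meet only few level-$n$ children of each parent; but take $c=1-\epsilon$ with $\epsilon$ much smaller than $\ell_n$ and $t=(1-c)x_0$ for some $x_0\in K$: then $cK+t$ displaces every point of $K$ by less than $\ell_n$ and therefore meets \emph{every} level-$n$ interval that $K$ meets, no matter how the children were placed. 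Since a weak contraction need not be a contraction, your $\alpha_n=\alpha(\ell_{n-1})$ can tend to $1$ as $n\to\infty$, so this regime is unavoidable; worse, $\alpha_n$ depends on $f$, which does not exist yet when $K$ is being built, so the lemma would have to be uniform over all $|c|<1$, where it fails. There is a second tension: the secant approximation of $f$ on a parent of length $\ell_{n-1}$ has error up to $(1-c)\ell_{n-1}$, which is negligible only when $c$ is near $1$ --- exactly the regime where the rigidity fails --- while for small $c$ the affine approximation is useless (error comparable to $\ell_{n-1}\gg\ell_n$), so the two halves of your argument do not mesh.

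The paper's proof avoids all quantitative contraction estimates. It uses only two facts about a weak contraction: $\diam f(E)\le\diam E$, and $\Fix(f)$ has at most one point. The key decomposition is $K\cap f(K)\subseteq\Fix(f)\cup\bigcup_n A_n$, where $A_n$ collects the points of each $n$th level piece that are images of points lying \emph{outside} that piece; this excises precisely the near-identity, near-fixed-point configurations that defeat your lemma. The construction then builds in an asymmetric separation property (property (iv), scheduled over all elementary pieces by the index function $\Phi$): at the level $m$ assigned to a given piece, the gaps between the children of each piece inside it exceed the diameter of the whole union of children of any piece outside it, so the image of each outside $m$th level piece, having no larger diameter, meets at most one $(m{+}1)$st level child inside, giving $\mathcal{H}^h(A_n)\le a_1\cdots a_n/a_{n+1}\le 1/n$. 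To rescue your plan you would need to replace the affine rigidity lemma by a deterministic separation condition of this kind and add the fixed-point decomposition; as written, the central step of your argument is both unproven and false as stated.
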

Based on this paper, A. M\'ath\'e and the author show
in \cite{BM} the following more general theorem. If $X$ is a Polish space, then the generic compact set $K\subseteq X$ is
either finite or there is a continuous gauge function $h$ such that $0<\mathcal{H}^{h}(K)<\infty$, and for every weak contraction $f\colon
K\to X$ we have $\mathcal{H}^{h} \left(K\cap f(K)\right)=0$. If $X$ is perfect, then the generic compact set $K\subseteq X$ is infinite,
so the first case does not occur. This is the measure theoretic analogue of Theorem
\ref{t:Etyp}, which also answers a question of C.~Cabrelli, U.~B.~Darji, and U.~M.~Molter.
This is the reason why we will work in Polish spaces instead of $\mathbb{R}$.

The structure of the paper will be as follows. In the Preliminaries
section we introduce some notation and definitions. In Section
\ref{s:K} we define balanced compact sets in a Polish space $X$, and we prove its existence if $X$ is uncountable.
In Section \ref{s:gauge} we show that every balanced compact set $K\subseteq X$ has a continuous gauge function $h$ such that
$0<\mathcal{H}^{h}(K)<\infty$. In Section
\ref{s:proof} we show that $\mathcal{H}^{h} \left(K\cap f(K)\right)=0$ for every weak contraction $f\colon K\to X$, which completes the proof of Theorem~\ref{t:cont}.
In Section \ref{s:main} we prove our Main Theorems based on
Theorem~\ref{t:cont} and ideas from \cite{E}.

\section{Preliminaries}

Let $(X,d)$ be a metric space, and let $A,B\subseteq X$ be arbitrary
sets. We denote by $\inter A$ and $\diam A$ the interior and the diameter of $A$,
respectively. We use the convention $\diam \emptyset = 0$. The \emph{distance} of the sets $A$ and
$B$ is defined by $\dist (A,B)=\inf \{d(x,y): x\in A, \, y\in B\}$. The function $h\colon [0,\infty)\to [0,\infty)$ is defined to be a
\emph{gauge function} if it
is non-decreasing, right-continuous, and $h(x)=0$ iff $x=0$. For all
$A\subseteq X$ and $\delta>0$ consider
\begin{align*}
\mathcal{H}^{h}_{\delta}(A)&=\inf \left\{ \sum_{i=1}^\infty
h\left(\diam A_{i}\right): A \subseteq \bigcup_{i=1}^{\infty} A_{i},
\, \forall i
\diam A_i \leq \delta \right\}, \\
\mathcal{H}^{h}(A)&=\lim_{\delta\to 0+}\mathcal{H}^{h}_{\delta}(A).
\end{align*}
We call $\mathcal{H}^{h}$ the \emph{$h$-Hausdorff measure}. For more information on
these concept see \cite{Ro}.

A metric space $X$ is \emph{perfect} if it has no isolated points.
A metric space $X$ is \emph{Polish} if it is complete and separable.

Given two metric spaces $(X,d_{X})$ and $(Y,d_{Y})$, a function
$f\colon X\to Y$ is called \emph{Lipschitz} if there is a constant $C
\in \mathbb{R}$ such that $d_{Y}(f(x_{1}),f(x_{2}))\leq C \cdot
d_{X}(x_{1},x_{2})$ for all $x_{1},x_{2}\in X$. The smallest such
constant $C$ is the \emph{Lipschitz constant} of $f$ and denoted by
$\Lip(f)$. If $\Lip(f)\leq 1$ then $f$ is a \emph{$1$-Lipschitz map},
if $\Lip(f)<1$ then $f$ is a \emph{contraction}. We say that $f$ is a \emph{weak contraction} if
$d_{Y}(f(x_{1}),f(x_{2}))<d_{X}(x_{1},x_{2})$ for all $x_{1},x_{2}\in X$, $x_1\neq x_2$.

Stand $\lambda$ for the Lebesgue measure of $\mathbb{R}$. Let us denote the positive odd numbers by $2\mathbb{N}+1$.

\section{The definition and existence of balanced compact sets} \label{s:K}

\begin{definition} If $a_n$ $(n\in \mathbb{N}^+)$ are positive integers
then let us consider for all $n\in \mathbb{N}^+$,
$$\mathcal{I}_{n}=\prod_{k=1}^{n}\{1,\dots,a_k\} \quad \textrm{and} \quad \mathcal{I}=\bigcup_{n=1}^{\infty} \mathcal{I}_{n}.$$
We say that a map $\Phi \colon 2\mathbb{N}+1 \to \mathcal{I}$
is an \emph{index function  according to the sequence $\langle a_n \rangle$}
if it is surjective and $\Phi(n) \in \bigcup _{k=1}^{n} \mathcal{I}_{k}$ for every odd $n$.
\end{definition}

\begin{definition} \label{d:balanced} Let $X$ be a Polish space.
A compact set $K\subseteq X$ is \emph{balanced} if it is of the form
\begin{equation} \label{eq:defK} K=\bigcap _{n=1}^{\infty}\left(\bigcup_{i_1=1}^{a_1} \cdots  \bigcup_{i_n=1}^{a_n}C_{i_1 \dots i_n} \right),
\end{equation}
where $a_{n}$ are positive integers and $C_{i_1\dots i_n}\subseteq X$ are non-empty closed sets with
the following properties. There are positive reals $b_n$ and there is an
index function $\Phi \colon 2\mathbb{N} +1 \to \mathcal{I}$ according to the sequence $\langle a_n\rangle$
such that for all $n\in \mathbb{N}^+$ and $(i_1,\dots ,i_{n}),(j_1,\dots ,j_{n})\in \mathcal{I}_{n}$

\begin{enumerate}[(i)]
\item \label{00} $a_1\geq 2$ and $a_{n+1}\geq n a_{1}\cdots a_{n}$,
\item \label{01} $C_{i_{1}\dots i_{n+1}}\subseteq
C_{i_1 \dots i_{n}}$,
\item \label{02} $\diam C_{i_{1} \dots i_n}\leq b_n$,
\item  \label{03} $\dist(C_{i_1 \dots i_n},C_{j_1\dots j_n})>2b_n$
if $(i_1,\dots ,i_n)\neq (j_1, \dots ,j_n)$.
\item \label{04}
If $n$ is odd, $C_{i_1 \dots i_{n}}\subseteq C_{\Phi(n)}$ and $C_{j_1 \dots j_{n}}
\nsubseteq C_{\Phi(n)}$, then for all $s,t
\in  \{1, \dots ,a_{n+1}\}$, $s\neq t$ we have
$$\dist\left(C_{i_1\dots i_{n}s},C_{i_1 \dots i_{n}t}\right)> \diam \left(\bigcup
_{j_{n+1}=1}^{a_{n+1}} C_{j_1 \dots j_{n}j_{n+1}}\right).$$
\end{enumerate}
\end{definition}

\begin{remark} The only reason why the domain of $\Phi$ is $2\mathbb{N}+1$ instead of $\mathbb{N}^+$ is that
we refer to this construction in \cite{BM}, where this slight difference is important.
\end{remark}

\begin{remark} In a countable Polish space $X$ there is no balanced compact set $K\subseteq X$, since
every balanced compact set has cardinality $2^{\aleph_0}$.
\end{remark}

\begin{theorem} \label{t:ex} If $X$ is an uncountable Polish space, then there exists a balanced compact set $K\subseteq X$.
\end{theorem}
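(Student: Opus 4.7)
The plan is to build the data $(a_n)$, $(b_n)$, $\Phi$, and the closed sets $C_{i_1\dots i_n}$ satisfying \eqref{00}--\eqref{04} by an explicit recursion, separating the purely combinatorial scaffolding from the geometric construction inside $X$.

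For the scaffolding, I would set $a_1=2$ and $a_{n+1}=n\,a_1\cdots a_n$, which trivially gives \eqref{00}. The countable set $\mathcal{I}$ can be enumerated as $\{\alpha_m:m\ge 1\}$, and writing $\ell_m$ for the length of $\alpha_m$ I would define $\Phi$ by sending the $m$-th odd integer which is $\ge\ell_m$ to $\alpha_m$, with the remaining odd integers mapped, say, to $(1)\in\mathcal{I}_1$. The resulting $\Phi$ is surjective and satisfies $\Phi(n)\in\bigcup_{k\le n}\mathcal{I}_k$ as required.

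Since $X$ is an uncountable Polish space, it contains a topological Cantor set $P$. The geometric heart of the proof is the inductive construction of the $C_{i_1\dots i_n}$ together with the $b_n$, carrying as an auxiliary invariant that each $C_{i_1\dots i_n}\cap P$ is a non-empty perfect compact set whose diameter is comparable to $b_n$. This invariant ensures that inside every surviving piece there remain arbitrarily many points at any prescribed pairwise distance below its diameter, supplying the geometric freedom needed at later levels. The step from level $n$ to level $n+1$ splits by the parity of $n$. If $n$ is even I pick, inside each $C_{i_1\dots i_n}\cap P$, a batch of $a_{n+1}$ widely spread points and take small $P$-neighbourhoods of them as the new children. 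If $n$ is odd I partition $\mathcal{I}_n$ into the \emph{good} indices (those with $C_{i_1\dots i_n}\subseteq C_{\Phi(n)}$) and the \emph{bad} ones, and fix a threshold $\varepsilon_n>0$ much smaller than the diameter of any good parent. For a bad parent I place all $a_{n+1}$ children inside a single Cantor subpiece of diameter below $\varepsilon_n$, so that the union of its children has diameter less than $\varepsilon_n$; for a good parent I spread the $a_{n+1}$ children so that their pairwise distances all exceed $\varepsilon_n$. Condition \eqref{04} then holds automatically. Finally I pick $b_{n+1}$ smaller than half of the smallest gap between any two distinct level-$(n+1)$ children and shrink each child (preserving the Cantor-set invariant) to have diameter below $b_{n+1}$; this yields \eqref{02} and \eqref{03}, while \eqref{01} is automatic from having worked inside the parents.

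The desired balanced set is $K=\bigcap_n\bigcup_{(i_1,\dots,i_n)}C_{i_1\dots i_n}$, which is non-empty and compact as a decreasing intersection of non-empty compacta, and balanced by construction. The main obstacle is condition \eqref{04}: simultaneously packing bad children tightly and spreading good children widely at the same level $n+1$ is only possible because the perfectness-and-comparable-diameter invariant on $C_{i_1\dots i_n}\cap P$ preserves enough geometric freedom at every step. Once this invariant is maintained, the remaining bookkeeping between $b_n$, the gaps, and the child diameters reduces to routine choices of positive shrinking parameters.
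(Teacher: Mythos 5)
Your construction is essentially the paper's: reduce to a perfect (Cantor) subset, inductively pick well-separated points for the children of ``good'' parents and tightly clustered points for the children of ``bad'' parents, then take small closed balls around them and choose $b_{n+1}$ tiny. One quantifier should be reversed, though: a perfect compact set of diameter $D$ need not contain $a_{n+1}$ points with pairwise gaps exceeding a threshold $\varepsilon_n$ fixed in advance merely as ``much smaller than $D$'' (the set could consist of two tiny clusters far apart), so you should first place the good parents' points and only then define $\varepsilon_n$ as the minimum pairwise distance among them --- which is exactly what the paper does with its $\delta$.
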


\begin{proof} Every uncountable Polish space contains a non-empty perfect subset, see \cite[(6.4) Thm.]{Ke},
so we may assume by shrinking that $X$ is also perfect.
Let us fix positive integers $a_n$ according to \eqref{00} and an index function $\Phi$ according to
the sequence $\langle a_n \rangle$.
We need to construct non-empty closed sets $C_{i_1\dots i_n}$ and positive reals $b_n$ that satisfy properties \eqref{01}-\eqref{04},
then the set $K=\bigcap _{n=1}^{\infty}\left(\bigcup_{i_1=1}^{a_1}   \cdots  \bigcup_{i_n=1}^{a_n}C_{i_1 \dots i_n} \right)$
will be a balanced compact set. Let $n\in \mathbb{N}$ and assume that $b_k$ and $C_{i_1\dots i_k}$ with $\inter C_{i_1\dots i_k}\neq \emptyset$ are already defined for all
$k\leq n$ and $(i_1,\dots,i_k)\in \mathcal{I}_k$, where we use the convention $\mathcal{I}_0=\{\emptyset\}$, $C_{\emptyset}=X$, and $b_0=\infty$. It is enough to construct $b_{n+1}$ and $C_{i_1\dots i_{n+1}}$ such that
$\inter C_{i_1\dots i_{n+1}}\neq \emptyset$ for all $(i_1,\dots, i_{n+1})\in \mathcal{I}_{n+1}$.

We define distinct points $x_{i_1\dots i_{n+1}}\in \inter C_{i_1\dots i_{n}}$ for all
$(i_1,\dots ,i_{n+1})\in \mathcal{I}_{n+1}$. First assume that $n$ is even. As $X$ is perfect and $\inter C_{i_1\dots i_{n}}\neq \emptyset$, we can fix distinct points
$x_{i_1\dots i_{n+1}}\in \inter C_{i_1\dots i_{n}}$ for all $(i_1,\dots ,i_{n+1})\in \mathcal{I}_{n+1}$.
Now assume that $n$ is odd. First consider those $(i_1,\dots, i_n)$ for which $C_{i_1\dots i_{n}}\subseteq C_{\Phi(n)}$,
then let us fix distinct points $x_{i_1\dots i_{n+1}}\in \inter C_{i_1\dots i_{n}}$ for all
$i_{n+1}\in \{1,\dots,a_{n+1}\}$. Let $\delta$ be the minimum distance between the points $x_{i_1 \ldots i_{n+1}}$ we have defined so far.
Now consider those $(i_1, \ldots, i_{n})$ for which $C_{i_1\dots i_{n}}\nsubseteq C_{\Phi(n)}$. For each of them, fix distinct points
$x_{i_1\dots i_{n+1}}\in \inter C_{i_1\dots i_{n}}$ for all $i_{n+1}\in \{1,\dots,a_{n+1}\}$ such that
$$ \diam \left(\bigcup_{i_{n+1}=1}^{a_{n+1}} \{x_{i_1\dots i_{n+1}}\}\right)\leq \frac{\delta}{2}.$$
For $(i_1,\dots,i_{n+1})\in \mathcal{I}_{n+1}$ consider the non-empty closed sets
$$C_{i_1\dots i_{n+1}}=B\left(x_{i_1\dots i_{n+1}}, b_{n+1}/2\right),$$
where $b_{n+1}>0$ is sufficiently small. Then the sets $C_{i_1\dots i_{n+1}}$ satisfy properties \eqref{01}-\eqref{04},
and clearly $\inter C_{i_1\dots i_{n+1}}\neq \emptyset$ for all $(i_1,\dots, i_{n+1})\in \mathcal{I}_{n+1}$.
\end{proof}

\begin{fact} \label{f:zero}
If $K\subseteq \mathbb{R}$ is a balanced compact set, then $K$ has zero Lebesgue measure.
\end{fact}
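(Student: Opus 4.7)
The plan is a direct covering argument using only properties \eqref{00}--\eqref{03}; property \eqref{04} and the index function $\Phi$ play no role here. For each $n$, the sets $C_{i_1\dots i_n}$ with $(i_1,\dots,i_n)\in\mathcal{I}_n$ cover $K$, so
$$\lambda(K) \leq \sum_{(i_1,\dots,i_n)\in\mathcal{I}_n} \diam C_{i_1\dots i_n} =: D_n,$$
and it suffices to prove $D_n\to 0$. I would do this by establishing the recursion $D_{n+1}\leq \tfrac{1}{2}D_n$.

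Fix a parent index $(i_1,\dots,i_n)\in\mathcal{I}_n$. Each child $C_{i_1\dots i_n i_{n+1}}$ is a closed subset of $\mathbb{R}$, so its convex hull is the closed interval $I_{i_{n+1}}=[\inf C_{i_1\dots i_n i_{n+1}},\sup C_{i_1\dots i_n i_{n+1}}]$ of length $\diam C_{i_1\dots i_n i_{n+1}}\leq b_{n+1}$. A short check using \eqref{03} and closedness shows the hulls $I_{1},\dots,I_{a_{n+1}}$ are pairwise disjoint with $\dist(I_s,I_t)\geq \dist(C_{i_1\dots i_n s},C_{i_1\dots i_n t})>2b_{n+1}$, and by \eqref{01} they all lie inside an interval of length $\diam C_{i_1\dots i_n}$. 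Ordering them along the line, the sum of their lengths plus the $a_{n+1}-1$ consecutive gaps is at most $\diam C_{i_1\dots i_n}$. Writing $S:=\sum_{i_{n+1}=1}^{a_{n+1}}\diam C_{i_1\dots i_n i_{n+1}}$, the individual bound $\diam C_{i_1\dots i_n i_{n+1}}\leq b_{n+1}$ forces $b_{n+1}\geq S/a_{n+1}$, so the total gap is at least $2(a_{n+1}-1)b_{n+1}\geq \tfrac{2(a_{n+1}-1)}{a_{n+1}}S$. Combining,
$$S\cdot\frac{3a_{n+1}-2}{a_{n+1}}\leq \diam C_{i_1\dots i_n}, \qquad\text{hence}\qquad S\leq \frac{a_{n+1}}{3a_{n+1}-2}\diam C_{i_1\dots i_n}\leq \tfrac{1}{2}\diam C_{i_1\dots i_n},$$
using $a_{n+1}\geq 2$, which follows from \eqref{00}.

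Summing this inequality over all $(i_1,\dots,i_n)\in\mathcal{I}_n$ yields $D_{n+1}\leq \tfrac{1}{2}D_n$. Since $D_1\leq a_1 b_1<\infty$, we obtain $D_n\to 0$ and therefore $\lambda(K)=0$. The only step requiring real thought is merging the per-child diameter bound with the separation-based gap bound into a single contraction factor on $D_n$; the rest is a one-line induction. I expect essentially no obstacle, since the strong separation \eqref{03} gives more than enough room to dominate the children's total length by a fixed fraction of the parent's length in $\mathbb{R}$.
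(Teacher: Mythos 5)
Your proposal is correct and is essentially the paper's own argument: the paper also covers $K$ by the convex hulls $I_{i_1\dots i_n}$ of the sets $C_{i_1\dots i_n}$ and uses properties \eqref{02} and \eqref{03} to get $\lambda(I^{*}_{n+1})\leq\lambda(I^{*}_{n})/2$, which is exactly your recursion $D_{n+1}\leq\tfrac12 D_n$. You merely spell out the halving step (separation gaps of length $>2b_{n+1}$ versus child diameters $\leq b_{n+1}$) in more detail than the paper does.
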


\begin{proof} For all $n\in \mathbb{N}^+$ and $(i_1,\dots,i_n)\in \mathcal{I}_n$ let $I_{i_1\dots i_n}\subseteq \mathbb{R}$ be compact intervals such that
$C_{i_1\dots i_n}\subseteq I_{i_1\dots i_n}$ and $\diam I_{i_1\dots i_n}=\diam C_{i_1\dots i_n}$.
Set $I^{*}_n=\bigcup_{i_1=1}^{a_1}   \cdots \bigcup_{i_n=1}^{a_n}I_{i_1\dots i_n}$.
Properties \eqref{02} and \eqref{03} imply that $\lambda(I^{*}_{n+1})\leq \lambda(I^{*}_n)/2$ for all $n\in \mathbb{N}^+$,
thus $K\subseteq \bigcap_{n=1}^{\infty} I^{*}_n$ has zero Lebesgue measure.
\end{proof}

\section{Balanced compact sets admit exact continuous gauge functions} \label{s:gauge}

The main goal of this section is to prove Theorem \ref{t:gauge}.

\bigskip

Assume that $X$ is a Polish space and $K\subseteq X$ is a
fixed balanced compact set. Let $a_n$, $b_n$, $C_{i_1\dots i_n}$, $\Phi$ be the objects witnessing
that $K$ is balanced according to Definition \ref{d:balanced}.

\begin{definition} \label{d:elem} Let $K_{i_1 \dots i_n}=K\cap C_{i_1 \dots i_n}$ for all $(i_1, \dots, i_n)\in \mathcal{I}_{n}$
and $n\in \mathbb{N}^+$. These sets are called the \emph{$n$th level elementary
pieces} of $K$. For a set $A\subseteq K$ we call the $n$th level elementary pieces of $A$ the $n$th level elementary pieces of $K$ that intersect $A$.
\end{definition}

\begin{theorem} \label{t:gauge} There exists a continuous gauge function
$h$ with $\mathcal{H}^{h}(K)=1$. Moreover,
$$\mathcal{H}^{h}(K_{i_1 \dots i_n})=\frac{1}{a_1 \cdots a_n}$$
for all $n\in \mathbb{N}^{+}$ and $(i_{1},\dots,i_{n})\in \mathcal{I}_{n}$.
\end{theorem}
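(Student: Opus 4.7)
The plan is the classical mass-distribution recipe for Cantor-like sets: put a natural probability measure $\mu$ on $K$, then design a continuous gauge $h$ so that the canonical cover of $K_{i_1\dots i_n}$ furnishes the upper bound $\mathcal{H}^h(K_{i_1\dots i_n})\leq \frac{1}{a_1\cdots a_n}$ and the mass distribution principle furnishes the matching lower bound.

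First I would extract a key geometric fact: applying \eqref{03} at level $n+1$ together with $a_{n+1}\geq 2$ from \eqref{00}, any two distinct level-$(n+1)$ pieces sharing a common parent lie in a set of diameter $\leq b_n$ yet are separated by more than $2b_{n+1}$; hence $b_n>2b_{n+1}$, and in particular $b_n\to 0$. This also lets me define $\mu$: the coding map sending $(i_1,i_2,\dots)\in\prod_{n=1}^{\infty}\{1,\dots,a_n\}$ to the unique point of $\bigcap_n C_{i_1\dots i_n}$ is a homeomorphism onto $K$, and pushing forward the product of uniform measures gives a Borel probability $\mu$ on $K$ with $\mu(K_{i_1\dots i_n})=\frac{1}{a_1\cdots a_n}$.

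Next I would build $h$ exploiting the non-degenerate gap $[2b_{n+1},b_n]$. Put $h(0)=0$, $h(b_n)=\frac{1}{a_1\cdots a_n}$, make $h$ constant equal to $\frac{1}{a_1\cdots a_n}$ on $[2b_{n+1},b_n]$, and interpolate affinely from $\frac{1}{a_1\cdots a_{n+1}}$ at $b_{n+1}$ up to $\frac{1}{a_1\cdots a_n}$ at $2b_{n+1}$ on $[b_{n+1},2b_{n+1}]$, with any continuous monotone extension beyond $b_1$. The gluing values match, so $h$ is a continuous non-decreasing gauge, and by construction $h(d)\geq \frac{1}{a_1\cdots a_m}$ for every $d\geq 2b_{m+1}$. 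Covering $K_{i_1\dots i_n}$ by its $a_{n+1}\cdots a_{n+k}$ level-$(n+k)$ subpieces gives $\mathcal{H}^h(K_{i_1\dots i_n})\leq a_{n+1}\cdots a_{n+k}\cdot h(b_{n+k})=\frac{1}{a_1\cdots a_n}$ as $k\to\infty$. For the reverse inequality I would invoke the mass distribution principle: given any $A$ with $d=\diam A\leq 2b_1$, take the largest $m$ with $2b_m\geq d$; by \eqref{03} the set $A$ meets at most one level-$m$ piece, so $\mu(A)\leq \frac{1}{a_1\cdots a_m}\leq h(d)$, and therefore $\mathcal{H}^h(K_{i_1\dots i_n})\geq \mu(K_{i_1\dots i_n})=\frac{1}{a_1\cdots a_n}$.

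The main obstacle is the tension in the design of $h$: the cover estimate forces $h(b_n)\leq \frac{1}{a_1\cdots a_n}$, whereas the mass-distribution estimate forces $h(d)\geq \frac{1}{a_1\cdots a_m}$ throughout $d>2b_{m+1}$, so $h$ must grow by a factor of $a_{n+1}$ between $b_{n+1}$ and $2b_{n+1}$ while remaining continuous and non-decreasing. This is possible precisely because the interval $(2b_{n+1},b_n)$ is non-empty, i.e.\ because of the strict inequality $b_n>2b_{n+1}$ extracted above from \eqref{03} and $a_{n+1}\geq 2$; without this gap, no continuous gauge could absorb the required factor.
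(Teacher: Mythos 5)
Your proposal is correct and matches the paper's proof in all essentials: the gauge $h$ you build is exactly the one defined in \eqref{d:h}, the upper bound uses the same canonical covers by level-$(n+k)$ pieces, and the lower bound rests on the same geometric fact that, by the separation property, a set of diameter at most $2b_m$ meets at most one $m$th level elementary piece. The only difference is one of packaging: you route the lower bound through the mass distribution principle applied to the coding measure $\mu$, whereas the paper runs the equivalent counting argument directly on a finite subcover (bounding the number $s_j$ of $m$th level pieces met by each cover element) without ever constructing $\mu$.
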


\begin{proof} Consider $h\colon [0,\infty)\to [0,\infty)$,
\begin{equation} \label{d:h} h(x)= \begin{cases} 1 & \textrm{ if } x\geq 2b_1, \\
\frac{1}{a_{1}\cdots a_{n}} & \textrm{ if } 2b_{n+1}\leq x \leq
b_{n}
\textrm{ for all } n\in \mathbb{N}^{+},\\
\textrm{linear } & \textrm{ if } b_{n}\leq x \leq 2b_{n} \textrm{
for all } n\in \mathbb{N}^{+}, \\
0 & \textrm{ if } x=0.
\end{cases}
\end{equation}
As $a_n\geq 2$ for all $n\in \mathbb{N}^{+}$, properties \eqref{01}-\eqref{03} yield that $2b_{n+1}<b_{n}$ for all $n\in \mathbb{N}^+$.
Thus $b_{n}<b_{1}/2^{n-1}\to 0$ as $n\to \infty$. These imply that $h$ is well-defined. Clearly, $h$ is
non-decreasing, continuous, and $h(x)=0$ iff $x=0$. Therefore $h$ is a
continuous gauge function. It is enough to prove that $\mathcal{H}^{h}(K)=1$,
because applying the same argument for $K_{i_1 \dots i_n}$ yields
the more general statement. Then $K\subseteq \bigcup
_{i_1=1}^{a_1}   \cdots  \bigcup _{i_n=1}^{a_n} C_{i_1 \dots i_n}$ and
$\diam C_{i_1 \dots i_n}\leq b_n$ imply
$$\mathcal{H}^{h}_{b_{n}}(K) \leq \sum_{i_1=1}^{a_1}  \cdots
\sum_{i_n=1}^{a_n} h\left(\diam C_{i_1 \dots i_n}\right)\leq
a_{1}\cdots a_{n}h(b_n)=1.$$
Since $b_{n}\to 0$ as $n\to \infty$, we obtain $\mathcal{H}^{h}(K)=
\lim_{n\to \infty} \mathcal{H}^{h}_{b_n}(K)\leq 1$.

For the opposite direction assume that $K\subseteq
\bigcup_{j=1}^{\infty} U_j$, it is enough to prove that
$\sum_{j=1}^{\infty} h\left(\diam U_j\right)\geq 1$. By the
continuity of $h$ we may assume that the $U_j$'s are non-empty open,
and the compactness of $K$ implies that there is a finite subcover $K\subseteq \bigcup_{j=1}^{k} U_j$. Let us fix $m \in \mathbb{N}$ such that $2b_{m}<\min_{1\leq j\leq k}  \diam U_j$.
For all $j\in \{1,\dots,k\}$ consider
$$s_j=\# \left\{(i_1,\dots,i_{m})\in \mathcal{I}_{m}: U_j\cap K_{i_1\dots
i_{m}}\neq \emptyset\right\}.$$
Since $K\subseteq \bigcup_{j=1}^{k}U_j$, we have
\begin{equation} \label{eq:sumsj} \sum_{j=1}^{k} s_{j}\geq
a_{1}\cdots a_{m}.
\end{equation}
Now we show that for all $j\in \{1,\dots,k\}$
\begin{equation} \label{eq:Uj} h\left(\diam U_j\right)\geq \frac{s_j}{a_1\cdots a_{m}}.
\end{equation}
Let us fix $j\in \{1,\dots, k\}$. If $\diam U_j\geq 2b_1$ then $
h\left(\diam U_j\right)=1$ and $s_j\leq a_1\cdots a_{m}$ imply
\eqref{eq:Uj}. Thus we may assume that there is an $1\leq n<m$ such
that $2b_{n+1}\leq \diam U_j \leq 2b_n$. On the one hand, \eqref{03} implies that $U_j$ can intersect at most one
$n$th level elementary piece of $K$, that is, $s_j\leq a_{n+1}\cdots
a_{m}$. On the other hand, the definition of $h$ implies $h\left(\diam
U_j\right)\geq \frac{1}{a_1 \cdots a_n}$. Therefore \eqref{eq:Uj} holds.
Finally, \eqref{eq:Uj} and \eqref{eq:sumsj} yield
$$\sum_{j=1}^{k} h\left(\diam U_j\right)\geq \sum_{j=1}^{k} \frac{s_j}{a_1\cdots
a_{m}}\geq 1,$$
and the proof is complete.
\end{proof}

\begin{remark} Note that property~(\ref{04}) and the notion of an index function $\Phi$ are not needed for the proof of Theorem~\ref{t:gauge}.
We used only the natural condition $a_n\geq 2$ $(n\in \mathbb{N}^{+})$ instead of property \eqref{00}.
\end{remark}

\begin{fact} \label{f:abs} Let $K\subseteq \mathbb{R}$ be a balanced compact set,
and let $h$ be the gauge function for $K$ according to \eqref{d:h}. Then $\lambda$ is absolutely continuous for $\mathcal{H}^{h}$.
\end{fact}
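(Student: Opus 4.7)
The plan is to prove a pointwise comparison of the form $h(x) \geq c\, x$ on a right neighborhood of $0$ for some constant $c > 0$; once this is in hand, absolute continuity $\lambda \ll \mathcal{H}^h$ is immediate, because any $h$-Hausdorff-efficient cover of a set transfers directly to a Lebesgue-efficient cover of the same set.

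The key step is to show that the quantity $L_n := a_1 \cdots a_n b_n$ stays bounded in $n$. Fix $(i_1,\dots,i_n) \in \mathcal{I}_n$; since we are in $\mathbb{R}$, the convex hull of $C_{i_1\dots i_n}$ is an interval of length at most $b_n$ containing the $a_{n+1}$ disjoint children $C_{i_1\dots i_n s}$, which by property \eqref{03} at level $n+1$ are pairwise separated by more than $2 b_{n+1}$. Counting the $a_{n+1}-1$ interior gaps yields $b_n \geq 2(a_{n+1}-1)\, b_{n+1}$, hence the recurrence
\begin{equation*}
L_{n+1} \leq \frac{a_{n+1}}{2(a_{n+1}-1)}\, L_n \leq L_n,
\end{equation*}
using $a_{n+1} \geq 2$ (which follows from \eqref{00}). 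Therefore $L_n \leq L_1 = a_1 b_1$ for every $n$. Feeding this into the definition \eqref{d:h}: on each constancy block $[2b_{n+1}, b_n]$ we have $h(x)/x \geq h(b_n)/b_n = 1/L_n \geq 1/L_1$, and on each linear block $[b_n, 2b_n]$ monotonicity of $h$ gives $h(x)/x \geq h(b_n)/(2b_n) = 1/(2L_n) \geq 1/(2L_1)$. Hence $h(x) \geq x/(2 a_1 b_1)$ for every $x \in (0, 2b_1]$.

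Given this comparison, let $A \subseteq \mathbb{R}$ be Borel with $\mathcal{H}^h(A) = 0$ and fix $\epsilon > 0$. Since $\mathcal{H}^h_{2b_1}(A) = 0$, cover $A$ by sets $A_i$ with $\diam A_i \leq 2b_1$ and $\sum_i h(\diam A_i) < \epsilon$; then
\begin{equation*}
\lambda(A) \leq \sum_i \diam A_i \leq 2 a_1 b_1 \sum_i h(\diam A_i) < 2 a_1 b_1\, \epsilon,
\end{equation*}
and letting $\epsilon \to 0$ gives $\lambda(A) = 0$. I expect the main obstacle to be the recurrence step: one must set up the gap count in exactly the right way (using that in $\mathbb{R}$ the $a_{n+1}$ disjoint children sit along a single interval with $a_{n+1}-1$ interior separations, each strictly greater than $2b_{n+1}$), and observe that the mild bound $a_n \geq 2$ already suffices here, even though property \eqref{00} is far stronger.
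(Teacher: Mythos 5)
Your proof is correct and follows essentially the same strategy as the paper: both establish a linear lower bound $h(x)\geq x/c$ near $0$ by using the separation property \eqref{03} together with one-dimensionality to show that $a_1\cdots a_n b_n$ stays bounded, and then transfer covers. The only difference is bookkeeping in that key estimate: the paper packs all $a_1\cdots a_n$ level-$n$ pieces into a single fixed interval $I$ to get $b_n\leq \diam I/(a_1\cdots a_n)$ in one global gap count (and then invokes concavity of $h$), whereas you pack the $a_{n+1}$ children of one piece into their parent to get the recurrence $b_n\geq 2(a_{n+1}-1)b_{n+1}$ and check the constancy and linear blocks directly; both routes are valid and need only $a_n\geq 2$.
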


\begin{proof}
Let $I$ be a compact interval such that $\bigcup_{i_1=1}^{a_1} C_{i_1}\subseteq I$, and assume $\diam I=c$. Set $g(x)=x/c$. First we prove that
$h(x)\geq g(x)$ for all $x\in [0,b_1]$. Let $n\in \mathbb{N}^{+}$. On the one side, the definition of $h$ implies $h(b_n)=\frac{1}{a_1\cdots a_n}$.
On the other side, \eqref{03} yields
$2b_{n}\left(\# \mathcal{I}_{n}-1\right)\leq \diam I$, so $b_n\leq \frac{\diam I}{2(\# \mathcal{I}_{n}-1)}\leq \frac{c}{a_1\cdots a_n}$.
Thus $h(b_n)\geq b_n/c=g(b_n)$. As $h$ is concave and $g$ is linear on
$[b_{n+1},b_n]$ for all $n\in \mathbb{N}^+$, we have $h(x)\geq g(x)$ for all $x\in [0,b_1]$.

Finally, $h|_{[0,b_1]}\geq g|_{[0,b_1]}$ implies that for all $A\subseteq \mathbb{R}$ we have
$\mathcal{H}^{h}(A)\geq \mathcal{H}^{g}(A)=\lambda(A)/c$, so $\lambda$ is absolutely continuous for $\mathcal{H}^h$.
\end{proof}

\section{The proof of Theorem \ref{t:cont}} \label{s:proof}

The goal of this section is to prove the following theorem.

\begin{theorem} \label{t:cont} Let $X$ be a Polish space, and let
$K\subseteq X$ be a balanced compact set. Then there exists a continuous gauge function $h$ such that
$0<\mathcal{H}^{h}(K)<\infty$, and for every weak contraction $f\colon K\to X$ we have $\mathcal{H}^{h}
\left(K\cap f(K)\right)=0$.
\end{theorem}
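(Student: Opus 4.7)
Theorem~\ref{t:gauge} already furnishes a continuous gauge $h$ with $0<\mathcal{H}^h(K)=1<\infty$, so the content of Theorem~\ref{t:cont} is the vanishing statement $\mathcal{H}^h(K\cap f(K))=0$ for every weak contraction $f\colon K\to X$. My plan is to exploit property~\eqref{04} and the surjectivity of the index function $\Phi$ in order to convert the pointwise strict inequality $d(f(x),f(y))<d(x,y)$ into a measure-zero conclusion.

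I would begin with two structural preliminaries. Since every weak contraction is $1$-Lipschitz, for every $n$ and every $p\in\mathcal{I}_n$ we have $\diam f(K_p)\le\diam K_p\le b_n$, and by property~\eqref{03} distinct level-$n$ pieces of $K$ are separated by more than $2b_n$, so $f(K_p)$ meets at most one level-$n$ piece, which I denote $K_{\varphi_n(p)}$ when it exists; let $E_n$ be the set of $p$ for which this target exists. Then $K\cap f(K)=\bigcup_{p\in E_n} K_{\varphi_n(p)}\cap f(K_p)$ and the whole task reduces to forcing a strict drop in the covering sum $\sum_{p\in E_n}h(\diam f(K_p))$ below the trivial bound $\sum_{p\in E_n}h(b_n)\le 1$. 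Separately, the compactness of $K$ promotes the pointwise strict inequality to a continuous, non-decreasing modulus $\rho(\delta):=\sup\{d(f(x),f(y)):x,y\in K,\ d(x,y)\le\delta\}$ with $\rho(\delta)<\delta$ for every $\delta>0$.

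I would then argue by contradiction. Assume $\mathcal{H}^h(K\cap f(K))>0$. Using the uniform mass distribution $\mathcal{H}^h(K_p)=1/(a_1\cdots a_n)$ from Theorem~\ref{t:gauge} and a density-point argument along the tree $\mathcal{I}$, one can produce a prefix $p^*\in\mathcal{I}$ whose relative density $\mathcal{H}^h(K_{p^*}\cap f(K))/\mathcal{H}^h(K_{p^*})$ is arbitrarily close to $1$. By surjectivity of $\Phi$ there are infinitely many odd $n$ with $\Phi(n)=p^*$, and at every such $n$ an \emph{extending} piece $K_p\subseteq K_{p^*}$ has children separated by more than the total children-cluster diameter $D_q=\diam(\bigcup_u C_{qu})$ of any \emph{non-extending} $K_q$. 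I would analyse the partial maps $\varphi_n$ and $\varphi_{n+1}$ near $K_{p^*}$, splitting each preimage piece under $\varphi_n$ according to whether it extends $p^*$. In the non-extending case, the spread-versus-tight contrast of~\eqref{04} should force a refined level-$(n+1)$ cover of $f(K_p)\cap K$ whose $h$-mass is strictly smaller than $h(b_n)$, using the plateau of $h$ on $[2b_{n+1},b_n]$; in the extending case, one is essentially looking at $f$ restricted as a weak contraction of $K_{p^*}$ into itself, to be handled by induction on the prefix. Iterating over the infinitely many odd $n$ with $\Phi(n)=p^*$ then drives the relative density to $0$, contradicting the choice of $p^*$.

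The main obstacle, and where I expect the technical heart of the argument to lie, is converting the pointwise strict inequality of a weak contraction into a quantitative combinatorial drop on the covering sums. The modulus $\rho$ satisfies $\rho(\delta)<\delta$, but the ratio $\rho(\delta)/\delta$ may tend to $1$ as $\delta\to 0$, so $\rho(b_n)<b_n$ does not by itself entail $\rho(b_n)<2b_{n+1}$ and the plateau of $h$ on $[2b_{n+1},b_n]$ does not immediately yield a drop. I anticipate the proof to compound the modest spread gap of~\eqref{04} across infinitely many odd $n$ with $\Phi(n)=p^*$, leveraging the freedom to take $b_{n+1}$ very small during the construction of $K$, so that the weak-contraction modulus $\rho$ eventually forces images below the plateau at some level. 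Making this interaction precise, together with the bookkeeping of which pieces extend $p^*$ and how their images distribute under $\varphi_n$, is the delicate point of the whole argument.
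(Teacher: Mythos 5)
Your proposal correctly assembles several ingredients of the paper's argument --- the gauge $h$ from Theorem~\ref{t:gauge}, the fact that a $1$-Lipschitz image of an elementary piece meets at most one piece of the same level, and the spread-versus-cluster contrast encoded in property~\eqref{04} --- but it is organized around the wrong decomposition, and the gap you yourself flag at the end is real and is not closed by anything in your outline. The missing idea is to split $K\cap f(K)$ not according to where $f(K_p)$ lands, but according to whether a point $y=f(x)$ has its preimage $x$ inside or outside the piece containing $y$: since the elementary pieces have diameters tending to $0$ and are separated, every $y\in K\cap f(K)$ with $y\neq f^{-1}$-witness $x$ eventually lies in $K_{i_1\dots i_n}\cap f\left(K\setminus K_{i_1\dots i_n}\right)$, so
$K\cap f(K)\subseteq \Fix(f)\cup\bigcup_n A_n$ with $A_n=\bigcup_{(i_1,\dots,i_n)}\left(K_{i_1\dots i_n}\cap f\left(K\setminus K_{i_1\dots i_n}\right)\right)$. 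This separates the two roles of the hypothesis: the \emph{strictness} of the weak contraction is used only to make $\Fix(f)$ a single point (hence $\mathcal{H}^h$-null), while everything else uses only that $f$ is $1$-Lipschitz. For a fixed $(i_1,\dots,i_n)$ one picks an odd $m\geq n$ with $\Phi(m)=(i_1,\dots,i_n)$; property~\eqref{04} then says that the image of each of the at most $a_1\cdots a_m$ level-$m$ pieces outside $K_{i_1\dots i_n}$ meets at most one level-$(m+1)$ child of $K_{i_1\dots i_n}$, giving $\mathcal{H}^h(A_n)\leq a_1\cdots a_n/a_{n+1}\leq 1/n$ by property~\eqref{00}. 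No modulus of continuity, density-point argument, or iteration over infinitely many odd levels is needed.

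By contrast, your plan requires extracting a \emph{quantitative} drop in the covering sum $\sum_p h(\diam f(K_p))$ from the pointwise strict inequality, and you correctly observe that this fails in general because $\rho(\delta)/\delta$ may tend to $1$; that obstacle is fatal to the route as you describe it, not merely a technicality to be filled in. Your proposed escape --- ``leveraging the freedom to take $b_{n+1}$ very small during the construction of $K$'' --- is also not available: Theorem~\ref{t:cont} is stated for an arbitrary balanced compact set $K$ already given, and even in the construction one cannot tune $b_{n+1}$ to the modulus of a particular $f$, since the single gauge $h$ must defeat every weak contraction simultaneously. The density-point/induction-on-prefix machinery is therefore both unnecessary and insufficient as written; the argument closes only once you make the $\Fix(f)\cup\bigcup_n A_n$ decomposition and let property~\eqref{04} do the counting.
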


\begin{proof} Let $a_n$, $b_n$, $C_{i_1\dots i_n}$, $\Phi$ be the objects witnessing
that $K$ is balanced according to Definition \ref{d:balanced}. Let $h$ be the continuous gauge
function for $K$ according to \eqref{d:h}. Theorem \ref{t:gauge} implies $\mathcal{H}^h(K)=1$. Let $f\colon K\to X$ be a weak
contraction, it is enough to prove that $\mathcal{H}^{h} \left(K\cap f(K)\right)=0$. For all $n\in \mathbb{N}^{+}$ let
$$A_{n}=\bigcup_{i_1=1}^{a_1}   \cdots  \bigcup_{i_n=1}^{a_n} \left( K_{i_1\dots i_n}
\cap f\left (K\setminus K_{i_1 \dots i_n}\right) \right).$$
First we prove
\begin{equation} \label{fix}
K\cap f(K) \subseteq \Fix(f) \cup \bigcup_{n=1}^{\infty} A_n,
\end{equation}
where  $\Fix (f)=\{x\in K: f(x)=x\}$. Assume that  $y \in K\cap f(K)$ and
$y\notin \Fix(f)$, we need to prove that $y\in \bigcup_{n=1}^{\infty} A_n$. There is an $x\in K$ such that $f(x)=y$ and $x\neq y$.
Then $\diam K_{i_1\dots i_n} \leq b_n$ and $b_n \to 0$ imply that there is an
$n\in \mathbb{N}^{+}$ and $(i_1,\dots,i_n)\in \mathcal{I}_{n}$ such that $y\in K_{i_1\dots i_n}$ and $x\in K\setminus K_{i_1 \dots i_n}$, so $y\in A_n$. Thus $y\in \bigcup_{n=1}^{\infty} A_n$, hence
\eqref{fix} holds.

As $f$ is a weak contraction, $\Fix(f)$ has at most $1$ element. Therefore \eqref{fix} implies that it is enough to prove that $\mathcal{H}^{h} \left(\bigcup_{n=1}^{\infty} A_n \right)=0$.
Property \eqref{01} easily yields that $A_{n}\subseteq A_{n+1}$ for all $n\in \mathbb{N}^{+}$, so it is enough to prove that
\begin{equation} \label{A_n}
\lim_{n\to \infty} \mathcal{H}^{h}(A_n)=0.
\end{equation}

Let us fix $n\in \mathbb{N}^{+}$ and $(i_1,\dots, i_n)\in \mathcal{I}_{n}$. The definition of $\Phi$ yields that there is an odd number $m \geq n$ such that
$\Phi(m)=(i_1,\dots, i_n)$. Let us denote by $\Delta_{m}$ the set of $m$th level elementary pieces of $K\setminus K_{i_1\dots i_n}$. Set $E\in \Delta_m$. As $f$ is a weak contraction, $\diam f(E)\leq \diam E$. Therefore \eqref{04} together with \eqref{02} and \eqref{03} imply that $f(E)$ can intersect at most one $m+1$st level elementary piece of
$K_{i_1\dots i_n}$. Thus $f\left(\bigcup \Delta_m \right)=f(K\setminus K_{i_1 \dots i_n})$ can intersect at most $\# \Delta_m\leq a_1\cdots a_m$ many $m+1$st level elementary pieces of
$K_{i_1\dots i_n}$. Theorem \ref{t:gauge} yields that every $m+1$st level elementary piece of $K$ has $\mathcal{H}^{h}$ measure $1/(a_1\cdots a_{m+1})$, and $m\geq n$ implies $a_{m+1}\geq a_{n+1}$. Therefore
\begin{equation} \label{Ki}
\mathcal{H}^{h}  \left( K_{i_1\dots i_n} \cap f\left (K\setminus K_{i_1 \dots i_n}\right)\right)\leq \frac{a_1\cdots a_{m}}{a_1\cdots a_{m+1}} = \frac{1}{a_{m+1}}\leq \frac{1}{a_{n+1}}.
\end{equation}
Finally, equation \eqref{Ki}, the definition of $A_n$,
the subadditivity of $\mathcal{H}^{h}$, and property \eqref{00} yield
$$\mathcal{H}^{h}(A_n)\leq \frac{a_1\cdots a_{n}}{a_{n+1}}\leq \frac{1}{n}.$$
Thus \eqref{A_n} follows, and
the proof is complete.
\end{proof}

\section{The proof of our Main Theorems} \label{s:main}

Let us recall that the main goal of our paper is to answer the following question.

\begin{qq} What are the lowest possible Borel classes of $\mathbb{R}^n$ having
a non-closed element $X$ such that every weak contraction
$f\colon X\to X$ is constant?
\end{qq}

If $n\geq 2$ then the answer is $\Delta^0_2$, and there is no non-closed $\Delta^0_2$ example in $\mathbb{R}$,
see the Introduction. If $n=1$ then the following theorems show that $\Sigma^0_2$ and $\Pi^0_2$ are the lowest possible
Borel classes satisfying Question \ref{q:Elekes}.

\begin{theorem}[Main Theorem, $F_{\sigma}$ case] \label{t:main1} There exists a non-closed $F_{\sigma}$
set $F\subseteq \mathbb{R}$ such that every weak contraction $f\colon F\to
F$ is constant.
\end{theorem}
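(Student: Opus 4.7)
The plan is to build $F$ using the balanced compact set $K \subseteq \mathbb{R}$ from Theorem~\ref{t:cont} as a rigid backbone, following Elekes's construction in \cite{E} (the proof of Theorem~\ref{t:Elekes2}) but with Theorem~\ref{t:cont} in place of the generic result Theorem~\ref{t:Etyp}. Concretely, I would start with a balanced compact set $K \subseteq \mathbb{R}$ with continuous gauge $h$ satisfying $0 < \mathcal{H}^h(K) < \infty$ and $\mathcal{H}^h(K \cap g(K)) = 0$ for every weak contraction $g\colon K \to \mathbb{R}$. I would then form $F$ as a union of countably many carefully placed scaled and translated copies $K_n = r_n K + t_n$ of $K$ (possibly together with auxiliary countable sets), arranged so that $F = \bigcup_n K_n$ is $F_\sigma$ (automatic as a countable union of compact sets), non-closed (some accumulation point of the $K_n$'s is absent from $F$), and crucially not $G_\delta$. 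This last requirement is essential, because if $F$ were in $\Delta^0_2$ then by Theorem~\ref{t:Elekes1} it could not even possess the Banach Fixed Point Property, let alone the strictly stronger property that every weak contraction is constant; so simple designs where the $K_n$ accumulate to a single missing point are insufficient and one must mimic the more subtle Elekes arrangement from \cite{E}.

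Given such an $F$ and a weak contraction $f\colon F \to F$, the argument is to analyze $f$ piece by piece. On each $K_n$, the restriction $f|_{K_n}\colon K_n \to \mathbb{R}$ is a weak contraction of a balanced compact set, and Theorem~\ref{t:cont}, applied with $K_n$ in place of $K$ and the appropriate rescaled gauge $h_n(x) = h(x/r_n)$, yields $\mathcal{H}^{h_n}(K_n \cap f(K_n)) = 0$. Choosing the gaps between the $K_n$'s to dominate the diameters of the relevant pieces forces $f(K_n) \subseteq K_{m(n)}$ for a single index $m(n)$, and the weak-contraction bound $\diam f(K_n) \leq \diam K_n$ constrains $m(n)$ monotonically (so that in particular orbits $m(n),\, m(m(n)),\,\ldots$ stabilize). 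Edelstein's fixed point theorem, applied to the restriction of $f$ to an $f$-invariant piece, then furnishes a unique fixed point $x^* \in F$, and combining this with the null-image conclusions of Theorem~\ref{t:cont} along iterates of $f$ is intended to force $f \equiv x^*$ globally.

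The hard part will be the simultaneous realization of two pulling-apart requirements on the architecture of $F$: the descriptive-complexity requirement that $F$ be $F_\sigma$ but not $G_\delta$ (so that $F \notin \Delta^0_2$, and hence is not ruled out by Theorem~\ref{t:Elekes1}), and the metric-rigidity requirement that the gaps, scales and relative positions of the $K_n$'s pin each $f(K_n)$ into a single $K_{m(n)}$ and propagate constancy along orbits to all of $F$. This is precisely the difficulty Elekes surmounts in \cite{E} for the BFPP case; the novelty for Theorem~\ref{t:main1} is verifying that his construction survives the replacement of the topological-genericity input Theorem~\ref{t:Etyp} (empty interior of $f(K)$ in $K$) by the weaker but still usable measure-theoretic input Theorem~\ref{t:cont} ($\mathcal{H}^h$-null intersection of $f(K)$ with $K$). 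Carefully tracking the null conclusions piecewise and combining them with Edelstein's theorem on invariant pieces is where I expect the delicate work to lie.
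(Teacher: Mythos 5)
Your overall instinct---replace Elekes's generic-compact-set input by the measure-theoretic Theorem~\ref{t:cont}---is the right one, but the architecture you propose for $F$ is not the one that works, and the two difficulties you flag at the end are not just delicate: as set up, they are fatal. First, the design ``$F=\bigcup_n K_n$ with gaps between the $K_n$ dominating the relevant diameters'' is incompatible with $F\notin\Delta^0_2$. An $F_\sigma$ set fails to be $G_\delta$ only if there is a non-empty closed set in which both $F$ and its complement are dense (otherwise a Baire category argument in that closed set fails to produce a contradiction); this forces copies of $K$ at arbitrarily small scales to be densely interleaved with points outside $F$, which destroys exactly the gap structure you need to pin $f(K_n)$ into a single $K_{m(n)}$. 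You acknowledge the tension but defer its resolution to ``the Elekes arrangement,'' and that arrangement is in fact a different object: the paper takes $F_0=\bigcup_{n}(K^{*}_n+q_n)$ (translated \emph{elementary pieces} of a single balanced $K$, one near each rational, with $\sum_n\mathcal{H}^h(K^{*}_n)<\infty$), takes a $G_\delta$ measure hull $G_0\supseteq F_0$ with $\mathcal{H}^h(G_0\setminus F_0)=0$, and sets $F=\mathbb{R}\setminus G_0$. This $F$ is a dense, full-Lebesgue-measure $F_\sigma$ set; it is not a union of copies of $K$ at all---the copies are essentially what is \emph{removed}.

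Second, even granting $f(K_n)\subseteq K_{m(n)}$, the conclusion $\mathcal{H}^h\bigl(K_{m(n)}\cap f(K_n)\bigr)=0$ does not force $f$ to be constant on $K_n$: an $\mathcal{H}^h$-null subset of $K$ can be uncountable, and Edelstein's theorem only yields a unique fixed point and convergence of iterates to it, never constancy. Your plan has no mechanism for upgrading ``null image'' to ``constant map.'' The paper closes this gap by exploiting the density of its $F$: a non-constant weak contraction $f\colon F\to F$ extends to a $1$-Lipschitz weak contraction $\widehat{f}\colon\mathbb{R}\to\mathbb{R}$ whose image $I=\widehat{f}(\mathbb{R})$ is a non-degenerate interval; $I$ must contain an entire translate $K^{*}_n+q_n$, so $\mathcal{H}^h(F_0\cap I)>0$ by Theorem~\ref{t:gauge}; on the other hand $F_0\cap I\subseteq\widehat{f}(G_0)=\widehat{f}(F_0)\cup\widehat{f}(G_0\setminus F_0)$, and both pieces meet $F_0$ in $\mathcal{H}^h$-null sets (the first by Theorem~\ref{t:cont} summed over rational translates, the second because $\widehat{f}$ is $1$-Lipschitz and the hull discrepancy is null). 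The contradiction $0<\mathcal{H}^h(F_0\cap I)\le 0$ is global and measure-theoretic, not a piece-by-piece orbit analysis. To repair your proof you would need to abandon the ``union of rigid copies plus Edelstein'' scheme and adopt this complement-of-a-null-hull construction, for which the density and co-nullity of $F$ (via Fact~\ref{f:abs}) are the essential new ingredients.
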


\begin{proof}  By Theorem \ref{t:ex} there exists a balanced compact set $K\subseteq \mathbb{R}$. Let $a_n$ be the positive integers and let $h$ be the continuous gauge
function for $K$ according to Definition \ref{d:balanced} and equation \eqref{d:h}, respectively.
Set $\mathbb{Q}=\{q_{n}: n\in \mathbb{N}^+\}$. Fix $z_0 \in K$ arbitrarily and for all $n\in \mathbb{N}^+$ let $K^{*}_{n}$ be the $n$th level elementary piece of $K$ containing $z_0$ (see Definition \ref{d:elem}). Consider
\begin{equation} \label{eq:F0}
F_{0}=\bigcup_{n=1}^{\infty} \left(K^{*}_{n}+q_n\right).
\end{equation}
Clearly, $F_{0}$ is an $F_{\sigma}$ set, thus $\mathcal{H}^h$ measurable.
The countable subadditivity and the translation invariance of
$\mathcal{H}^h$, and Theorem \ref{t:gauge} imply
\begin{align*}\mathcal{H}^h(F_0)&\leq \sum_{n=1}^{\infty} \mathcal{H}^h\left(K^{*}_{n}+q_n\right)= \sum_{n=1}^{\infty} \mathcal{H}^h\left(K^{*}_{n}\right)\\
&= \sum_{n=1}^{\infty} \frac{1}{a_1\cdots a_n} \leq
\sum_{n=1}^{\infty} \frac{1}{2^n}=1.
\end{align*}
As $F_0$ is a $\mathcal{H}^h$-measurable set with finite measure, there is a $G_{\delta}$ set
$G_0\subseteq \mathbb{R}$ such that
\begin{equation} \label{eq:G0} F_0\subseteq G_0 \quad \textrm{and} \quad \mathcal{H}^{h}(G_0\setminus F_0)=0,
\end{equation}
see \cite[Thm. 27.]{Ro} for the proof.
Set $F=\mathbb{R} \setminus G_0$. Clearly, $F$ is an $F_{\sigma}$ set. First we prove that $F$ is non-closed.
Fact \ref{f:zero} yields $\lambda(K)=0$, so the translation
invariance and the countable subadditivity of the Lebesgue measure
imply $\lambda(F_0)=0$. Fact \ref{f:abs} and \eqref{eq:G0} imply
$\lambda\left(G_0\setminus F_0\right)=0$. Hence $\lambda(G_0)=0$. Therefore $G_0\neq \emptyset$ yields that $G_0$ is not open,
so $F=\mathbb{R}\setminus G_0$ is non-closed. These imply also that $F$ is of full Lebesgue measure,
therefore it is dense in $\mathbb{R}$.

Assume to the contrary that there exists a non-constant weak
contraction $f\colon F\to F$. As $F$ is dense in $\mathbb{R}$, $f$ has a unique
$1$-Lipschitz extension $\widehat{f}\colon \mathbb{R}\to \mathbb{R}$. First we
prove that $\widehat{f}$ is a weak contraction. Assume to the
contrary that there are $a,b\in \mathbb{R}$, $a<b$ such that
$\big|\widehat{f}(b)-\widehat{f}(a)\big|=|b-a|$. Since $\widehat{f}$
is $1$-Lipschitz, for all $x,y\in [a,b]$ we have
\begin{equation} \label{eq:x}
\big|\widehat{f}(x)-\widehat{f}(y)\big|=|x-y|.
\end{equation}
Since $F$ is dense in $\mathbb{R}$, there are $x_0,y_0\in
F\cap[a,b]$, $x_0\neq y_0$. Applying \eqref{eq:x} for $x_0,y_0$
contradicts that $f$ is a weak contraction. Thus $\widehat{f}$ is a
weak contraction.

As $f$ is non-constant, $I=\widehat{f}(\mathbb{R})$ is a non-degenerate
interval. Then $\widehat{f}(F)=f(F)\subseteq F$ and the definition of $F$ implies $F_0\cap I\subseteq  I\setminus
F\subseteq \widehat{f}(\mathbb{R}\setminus F)
=\widehat{f}(G_0)$, so
\begin{equation} \label{eq:K+Q1} F_0\cap I\subseteq F_0\cap \widehat{f}(G_0).
\end{equation}
Equation \eqref{02} and $b_n\to 0$ yield $\diam K^{*}_{n}\to 0$ as $n\to \infty$. Thus $z_0\in K^{*}_{n}$ implies that there exists an $n\in \mathbb{N}^+$ such that $K^{*}_{n}+q_n\subseteq I$, and Theorem \ref{t:gauge} implies $\mathcal{H}^{h}\left(K^{*}_{n}\right)>0$. Therefore the translation invariance of $\mathcal{H}^{h}$ yields
\begin{equation} \label{eq:pos}
\mathcal{H}^{h}(F_0\cap I)\geq \mathcal{H}^{h}\left(K^{*}_{n}+q_n \right)=\mathcal{H}^{h}\left(K^{*}_{n}\right)>0.
\end{equation}
Theorem \ref{t:cont} implies that for all $p,q\in \mathbb{Q}$ we have
$\mathcal{H}^{h}\big((K+p)\cap \widehat{f}\left(K+q\right)\big)=0$,
as $\widehat{f}\left(K+q\right)$ is a weak contractive image of $K+p$. Therefore
$F_0\subseteq K+\mathbb{Q}$ and the countable subadditivity of $\mathcal{H}^{h}$
yield
\begin{align} \label{eq:K+Q2}
\mathcal{H}^{h}\left(F_0\cap \widehat{f}(F_0)\right)&\leq
\mathcal{H}^{h}\left((K+\mathbb{Q})\cap \widehat{f}(K+\mathbb{Q})\right) \notag \\
&\leq \sum_{p,q\in \mathbb{Q}} \mathcal{H}^{h} \left((K+p)\cap
\widehat{f}\left(K+q\right)\right) \\
&=0. \notag
\end{align}
As $\widehat{f}$ is a weak contraction and \eqref{eq:G0} holds, we
obtain
\begin{equation} \label{eq:K+Q3}
\mathcal{H}^{h}\left(\widehat{f}\left(G_0\setminus F_0\right)\right)\leq
\mathcal{H}^{h}\left(G_0\setminus F_0\right)=0.
\end{equation}
Finally, equations \eqref{eq:pos}, \eqref{eq:K+Q1}, the subadditivity of $\mathcal{H}^{h}$, \eqref{eq:K+Q2}, and \eqref{eq:K+Q3}
imply
\begin{align*} 0&<\mathcal{H}^{h}\left(F_0\cap I\right)\leq \mathcal{H}^{h}\left(F_0\cap
\widehat{f}(G_0)\right) \\
&\leq \mathcal{H}^{h}\left(F_0\cap
\widehat{f}(F_0)\right)+\mathcal{H}^{h}\left(\widehat{f}\left(G_0\setminus
F_0\right)\right)=0.
\end{align*}
This is a contradiction, so the proof is complete.
\end{proof}

\begin{theorem}[Main Theorem, $G_{\delta}$ case] \label{t:main2} There exists a non-closed $G_{\delta}$
set $G\subseteq \mathbb{R}$ such that every weak contraction $f\colon G\to
G$ is constant.
\end{theorem}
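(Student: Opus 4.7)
The key observation is that the $G_{\delta}$ case is actually simpler than the $F_{\sigma}$ case: it suffices to take $G$ to be the complement of the $F_{\sigma}$ set $F_0$ already constructed in the proof of Theorem~\ref{t:main1}, so no measure-theoretic approximation by an outer $G_{\delta}$ is needed. Concretely, apply Theorem~\ref{t:ex} to obtain a balanced compact set $K\subseteq \mathbb{R}$, let $h$ be the gauge function from~\eqref{d:h}, enumerate $\mathbb{Q}=\{q_n:n\in \mathbb{N}^+\}$, choose an $n$th level elementary piece $K_n^*\subseteq K$ for each $n$, form $F_0=\bigcup_{n=1}^{\infty}(K_n^*+q_n)$ as in~\eqref{eq:F0}, and put
\[G=\mathbb{R}\setminus F_0.\]
Then $G$ is manifestly $G_{\delta}$. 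Fact~\ref{f:zero} and translation invariance of $\lambda$ give $\lambda(F_0)=0$, so $G$ is of full Lebesgue measure, hence dense, while $G\neq \mathbb{R}$ because $F_0\supseteq K_1^*+q_1\neq \emptyset$; these together make $G$ non-closed.

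Suppose, for contradiction, that $f\colon G\to G$ is a non-constant weak contraction. As in Theorem~\ref{t:main1}, density of $G$ produces a unique $1$-Lipschitz extension $\widehat{f}\colon \mathbb{R}\to \mathbb{R}$, and the same density argument shows that $\widehat{f}$ is again a weak contraction; since $\widehat{f}$ is non-constant, $I=\widehat{f}(\mathbb{R})$ is a non-degenerate interval. The central inclusion to establish is
\[F_0\cap I\subseteq \widehat{f}(F_0),\]
which is the clean dual of~\eqref{eq:K+Q1}: if $y\in F_0\cap I$ is written as $y=\widehat{f}(x)$, then the assumption $x\in G$ would give $\widehat{f}(x)=f(x)\in G=\mathbb{R}\setminus F_0$, contradicting $y\in F_0$; hence $x\in F_0$, so $y\in \widehat{f}(F_0)$.

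The contradiction is then obtained by the same measure comparison as in Theorem~\ref{t:main1}. On one hand, since $\diam K_n^*\to 0$ by~\eqref{02}, for some $n$ we have $K_n^*+q_n\subseteq I$, and Theorem~\ref{t:gauge} together with the translation invariance of $\mathcal{H}^h$ yield $\mathcal{H}^h(F_0\cap I)\geq \mathcal{H}^h(K_n^*)>0$. On the other hand, $F_0\subseteq K+\mathbb{Q}$, and for each $p,q\in \mathbb{Q}$ Theorem~\ref{t:cont} applied to the balanced compact set $K+p$ and to the weak contraction $x\mapsto \widehat{f}(x+q-p)$ gives $\mathcal{H}^h\left((K+p)\cap \widehat{f}(K+q)\right)=0$, so the countable subadditivity of $\mathcal{H}^h$ forces $\mathcal{H}^h\left(F_0\cap \widehat{f}(F_0)\right)=0$, contradicting the lower bound via the displayed inclusion. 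I do not anticipate any serious obstacle: the only conceptual step is identifying the correct $G_{\delta}$ set, after which every ingredient is already provided by Theorems~\ref{t:cont} and~\ref{t:gauge}, Fact~\ref{f:zero}, and the proof of Theorem~\ref{t:main1}.
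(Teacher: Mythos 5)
Your proof is correct and takes exactly the paper's route: the paper also sets $G=\mathbb{R}\setminus F_0$ and then repeats the argument of Theorem~\ref{t:main1} verbatim with $G$ and $F_0$ playing the roles of $F$ and $G_0$. You have simply written out explicitly the details the paper leaves to the reader, including the observation that the error term $\widehat{f}(G_0\setminus F_0)$ is no longer needed.
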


\begin{proof} Let $G=\mathbb{R}\setminus F_0$, for the definition of $F_0$ see \eqref{eq:F0}. Clearly, $G$ is a
$G_{\delta}$ set. Since $\lambda (F_0)=0$, we obtain that $G$ is of full Lebesgue measure,
thus it is non-closed and dense in $\mathbb{R}$.
Assume to the contrary that $f\colon
G\to G$ is a non-constant weak contraction. Now the argument can be completed by replacing $F$ and $G_0$ in the proof of
Theorem \ref{t:main1} by $G$ and $F_0$, respectively. Notice that $F_0$ remains $F_0$, e. g. $G_0\setminus F_0$
becomes $F_0\setminus F_0=\emptyset$. The reason of this asymmetry is that we do not consider $G_\delta$ hulls as in \eqref{eq:G0},
which makes things a little bit easier.
\end{proof}

\subsection*{Acknowledgements}
The author is indebted to A. M\'ath\'e for some helpful suggestions. He also pointed out that the original
proof of Theorem \ref{t:cont} can be significantly shortened.


\begin{thebibliography}{99}

\bibitem{BM} R. Balka and A. M\'ath\'e, Generalized Hausdorff measure for generic compact sets, 
\textit{Ann. Acad. Sci. Fenn. Math.} \textbf{38} (2013), 797--804.

\bibitem{B} E. Behrends, Problem Session of the 34th Winter School in Abstract Analysis,
2006.

\bibitem{Bi} R. H. Bing, Snake-like continua, \textit{Duke Math. J.} \textbf{18} (1951), 653--663.

\bibitem{C} H. Cook, Continua which admit only the identity
mapping onto nondegenerate subcontinua, \textit{Fund. Math.} \textbf{60}
(1967), 241--249.

\bibitem{D} T. Dobrowolski, private communication, 2006.

\bibitem{E} M. Elekes, On a converse to Banach's Fixed Point
Theorem, \textit{Proc. Amer. Math. Soc.} \textbf{137} (2009), no. 9,
3139--3146.

\bibitem{Ke} A. S. Kechris, \textit{Classical descriptive set theory}, Springer-Verlag, 1995.

\bibitem{Ma1} T. Ma\'ckowiak, The condensation of singularities in arc-like continua, \textit{Houston J. Math.} \textbf{11} (1985), 535--558.

\bibitem{Ro} C. A. Rogers, \textit{Hausdorff measures},
Cambridge University Press, 1970.

\end{thebibliography}
\end{document}